%% file: main_modified.tex
\documentclass[journal,twoside,web]{ieeecolor}

\usepackage{mathptmx} 
\usepackage{amssymb}

\usepackage{amsmath, amsthm}
\usepackage{graphicx}
\usepackage{subcaption}
\usepackage{graphicx}
\usepackage{cite}
\usepackage{siunitx}
\usepackage{algorithmic}
\usepackage{algorithm}
\usepackage{booktabs}

\usepackage{tikz}
\usepackage{graphicx}

\newcommand{\bmat}[1]{\begin{bmatrix}#1\end{bmatrix}}
\newcommand{\tp}{\mathsf{T}}
\newcommand{\norm}[1]{\lVert{#1}\rVert}
\newcommand{\mM}{{\mathsf{M}}}
\newcommand{\R}{\mathbb{R}}

\newtheorem{theorem}{Theorem}
\newtheorem{lemma}{Lemma}

\newtheorem{definition}{Definition} 
\newtheorem{proposition}{Proposition}
\newtheorem{remark}{Remark}

\newtheorem{assumption}{Assumption}

\newcommand{\bmtx}{\begin{bmatrix}}
\newcommand{\emtx}{\end{bmatrix}}
\newcommand{\bsmtx}{\left[ \begin{smallmatrix}} 
\newcommand{\esmtx}{\end{smallmatrix} \right]}

\usepackage{generic}
\usepackage{cite}
\usepackage{amsfonts}
\usepackage{algorithmic}
\usepackage{graphicx}
\usepackage{algorithm,algorithmic}
\usepackage{hyperref}
\hypersetup{hidelinks=true}
\usepackage{textcomp}

\def\BibTeX{{\rm B\kern-.05em{\sc i\kern-.025em b}\kern-.08em
    T\kern-.1667em\lower.7ex\hbox{E}\kern-.125emX}}
\begin{document}

\title{
Scalable Incremental Robustness Analysis of Neural Network Feedback Systems 
}

\author{
Zichen Wang$^{1}$, Peter Seiler$^{2}$, \IEEEmembership{Fellow, IEEE},  Geir Dullerud$^3$, \IEEEmembership{Fellow, IEEE}, and Bin Hu$^{1}$, \IEEEmembership{Member, IEEE}
\thanks{\textsuperscript{\dag}Generative AI tools were used solely to improve
the language and presentation of the manuscript. All scientific ideas,
analyses, results, and conclusions are those of the authors, who take full
responsibility for the content of this work.}
\thanks{*This work was supported by  the
AFOSR award FA9550-23-1-0732.}% <-this % stops a space
\thanks{$^{1}$Zichen Wang and Bin Hu are with the Coordinated Science Laboratory
and the Department of Electrical and Computer Engineering, University
of Illinois Urbana–Champaign
        ({\tt\small zichenw6@illinois.edu; binhu7@illinois.edu}).
$^{2}$Peter Seiler is with the Department of Electrical Engineering and Computer Science, at the University of Michigan
({\tt\small pseiler@umich.edu}). 
$^{3}$Geir Dullerud is with the Electrical and Computer Engineering Department at the University of Minnesota
        ({\tt\small dullerud@umn.edu}).}
} 
 
\maketitle

\begin{abstract}
Semidefinite programming (SDP) certificates for feedback systems containing deep neural networks (NNs) typically scale with the total number of neurons, whereas small-gain tests are scalable but can be highly conservative. This paper develops a unified and scalable framework for incremental robust stability and performance analysis of feedback interconnections involving high-dimensional NNs and unmodeled dynamics. By combining a structured decomposition of the full-order SDP condition with scalable Lipschitz constant estimation algorithms, we derive reduced verification conditions that certify incremental convergence and incremental $\ell_2$-gain bounds. The dimensions of the resulting control-analysis linear matrix inequalities (LMIs) depend only on the widths of the last two network layers and are \textit{independent of  network depth}. The framework preserves the coupling between the plant and the NN, with the incremental small-gain condition recovered as a special case. To further reduce conservatism, we develop a multi-round alternating update scheme that iteratively refines the coupling variables while preserving scalability. Numerical experiments show that the proposed framework achieves  state-of-the-art incremental $\ell_2$-gain bounds for large-scale NN feedback systems.

\end{abstract}

\begin{IEEEkeywords}
Incremental stability. Integral quadratic constraints. Neural-network feedback systems. scalable verification
\end{IEEEkeywords}

\input{Intro_modified}

\input{Problem_formulation_modified}

\input{IQC}

\input{Scalable}

\input{Normal_stability_verification_modified}

\input{Further_properties}

\input{Numerical_modified}

\section{Conclusion}
\label{sec:conclud}

This paper develops a unified and scalable framework for incremental
stability and performance analysis of feedback interconnections involving
NNs and additional uncertainties. By combining a structured decomposition
of the full-order SDP condition with scalable Lipschitz constant estimation,
the resulting control-analysis LMIs depend only on the widths
of the last two network layers.
A multi-round alternating update scheme is further
introduced to reduce conservatism. The proposed conditions guarantee
incremental convergence and a finite upper bound on the incremental
$\ell_2$ gain, and, under the additional assumptions of Theorem~\ref{Theorem:bounded}, also
guarantee bounded individual state trajectories. Pointwise asymptotic
convergence to a common limit may require an additional assumption.
Numerical experiments demonstrate the scalability and reduced conservatism
of the proposed framework compared with existing scalable approaches.

\input{Appendix_modified}

\section*{References}

\bibliographystyle{IEEEtran}
\bibliography{main} 

\begin{IEEEbiography}[{\includegraphics[width=1in,height=1.25in,clip,keepaspectratio]{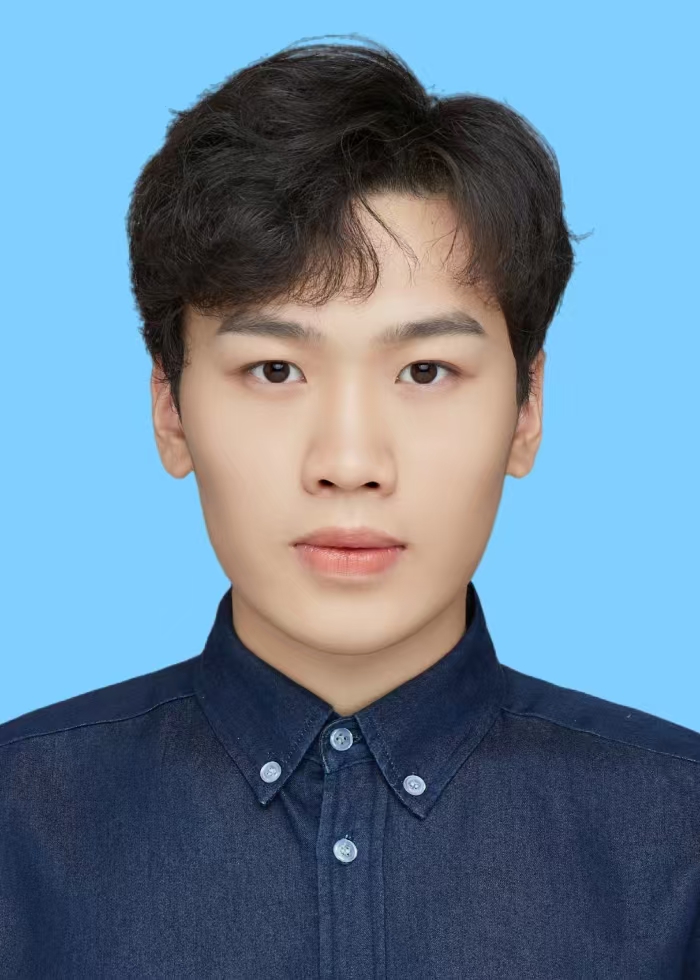}}]{Zichen Wang} received the B.E. degree in Electronic Information Engineering from Southwest University, Chongqing, China, in 2024. He is currently a Ph.D. student in the Department of Electrical and Computer Engineering and the Coordinated Science Laboratory at the University of Illinois Urbana-Champaign.

His research interests include analysis and verification of learning-based control systems and reinforcement learning.
\end{IEEEbiography}

\begin{IEEEbiography}[{\includegraphics[width=1in,height=1.25in,clip,keepaspectratio]{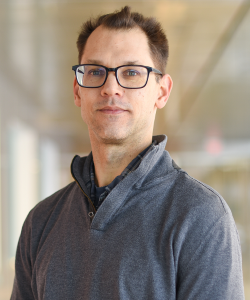}}]{Peter
Seiler}
  (Fellow, IEEE) received B.S. degrees in Mathematics and Mechanical
  Engineering from the University of Illinois, Urbana-Champaign in
  1996.  He received the Ph.D. degree in Mechanical Engineering from
  the University of California, Berkeley, in 2001. He is currently a
Professor in Electrical Engineering and Computer Science at the
University of Michigan, Ann Arbor. He was previously
  on the faculty at the University of Minnesota, Twin Cities, in
  Aerospace Engineering and Mechanics from 2011-2019.  He was also a
  Principal Scientist R\&D at the Honeywell Labs in Minneapolis
  Minnesota from 2004-2008.  His current research interests include
  merging robust control techniques with online optimization and
  learning-based methods. Dr. Seiler received the National Science
  Foundation CAREER Award in 2013, Brockett-Willems Outstanding Paper
  Award in 2021, and the O. Hugo Schuck Best Paper Award in 2003.
\end{IEEEbiography}

\begin{IEEEbiography}[{\includegraphics[width=1in,height=1.25in,clip,keepaspectratio]{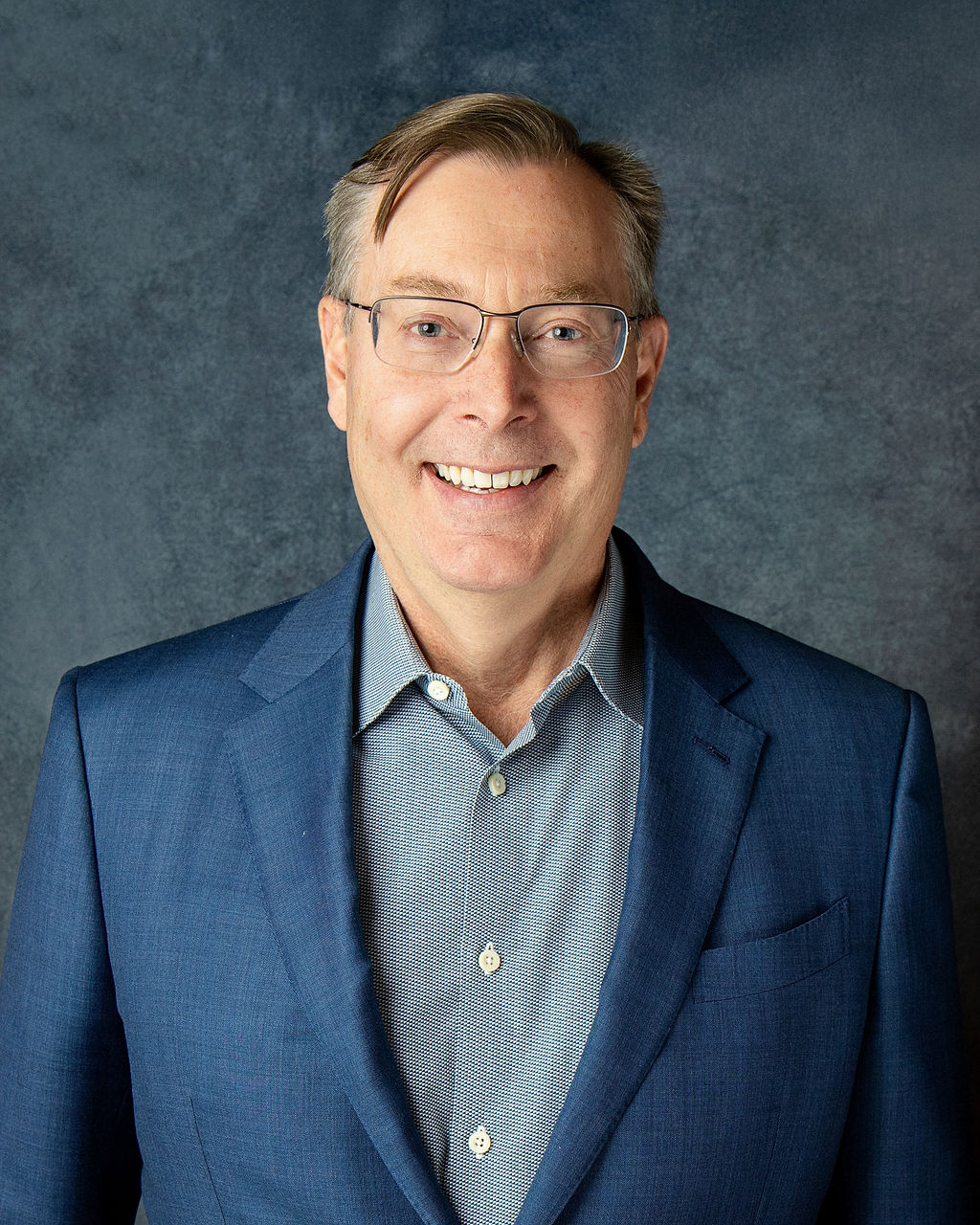}}]{Geir E. Dullerud} (Fellow, IEEE) 
is Department Head and Professor of Electrical and Computer Engineering at the University of Minnesota.  Prior to this, 1998-2025, he was on the faculty of the University of Illinois as a member of the Mechanical Science and Engineering Department, and the Coordinated Science Laboratory.  Preceding this he was an Assistant Professor in Applied Mathematics  with the University of Waterloo, Waterloo, ON, Canada, before which he held the appointment of Research Fellow and Lecturer with the Department of Electrical Engineering, California Institute of Technology, Pasadena, CA, USA.
He has held visiting positions  with the KTH Royal Institute of Technology, Stockholm, Sweden, and  Stanford University, Stanford, CA, USA.  He holds a PhD from Cambridge University in engineering.  His current research interests include AI and machine learning in control, cooperative robotics, networks, and system verification. 
He is a Senior Editor for the IEEE Transactions on Automatic Control.  

\end{IEEEbiography}

\begin{IEEEbiography}[{\includegraphics[width=1in,height=1.25in,clip,keepaspectratio]{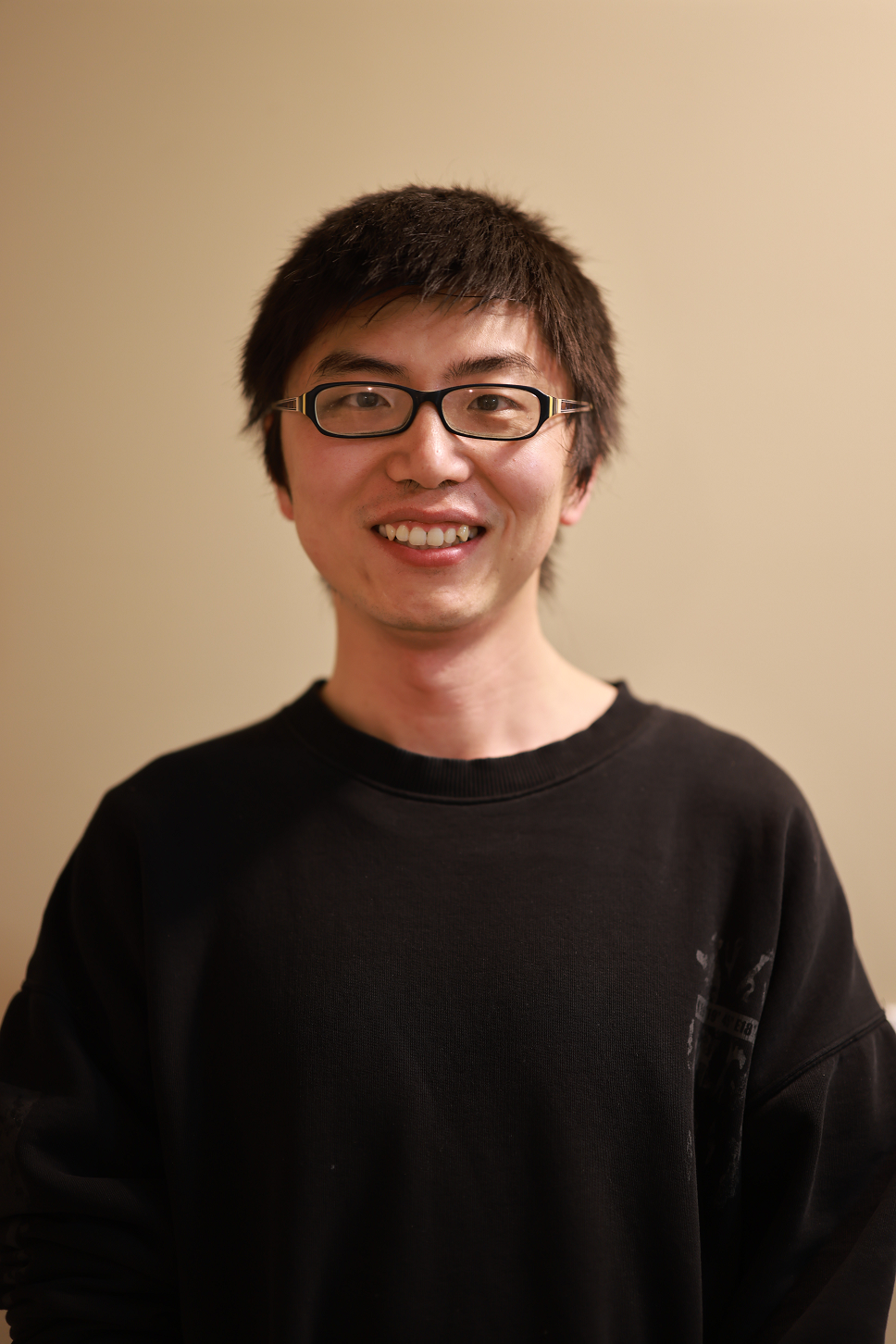}}]{Bin Hu}
  (Member, IEEE)  received the B.S. degree in theoretical and applied mechanics from
the University of Science and Technology of
China, Hefei, China, in 2008, the M.S. degree in
computational mechanics from Carnegie Mellon
University, Pittsburgh, PA, USA, in 2010, and
the Ph.D. degree in aerospace engineering and
mechanics from the University of Minnesota,
Minneapolis, MN, USA, in 2016.
Between July 2016 and July 2018, He was
a Postdoctoral Researcher with the Wisconsin
Institute for Discovery, University of Wisconsin-Madison, Madison, WI,
USA. He is currently an Associate Professor with the Department of
Electrical and Computer Engineering, University of Illinois Urbana
Champaign, Champaign, IL, USA, and holds affiliations with the
Coordinated Science Laboratory and the Siebel School of Computing and Data Science. His research focuses on establishing
fundamental connections between control and machine learning.  Bin received the NSF CAREER award and the
Amazon Research Award in 2021, and the O. Hugo Schuck Best Paper
Award in 2024
\end{IEEEbiography}

\end{document}

%% file: Intro_modified.tex
\section{Introduction}

\IEEEPARstart{N}{eural} networks are increasingly incorporated into feedback control systems as learned controllers, policy representations, and nonlinear decision-making components in applications such as robotics, autonomous systems, and reinforcement learning \cite{jiang2020learning,bucsoniu2018reinforcement}. In many NN-in-the-loop architectures, a feedforward NN can be viewed as a static nonlinear mapping interconnected with a dynamical system. Rigorous guarantees on closed-loop stability and performance are therefore important for reliable deployment. Such guarantees, however, become increasingly difficult to compute as modern NNs grow in depth and width. In addition, analysis relative to a prescribed equilibrium can be restrictive in the presence of NN biases, disturbances, or uncertain components. This motivates an incremental viewpoint, which studies the behavior of pairs of trajectories and provides a natural framework for characterizing convergence and input--output sensitivity \cite{angeli2002lyapunov,lohmiller1998contraction,9029867,hashemi2021incremental}.

Quadratic constraints, integral quadratic constraints (IQCs), and dissipativity provide a systematic framework for analyzing nonlinear and uncertain feedback interconnections \cite{megretski1997system,willems2007dissipative,seiler13,hu2017robustness}. These tools have been applied to NN feedback systems by exploiting quadratic constraints satisfied by slope-restricted activation functions \cite{yin2021stability,Sahel2024,richardson2023strengthened,hashemi2021incremental,desouza2023event,zhang2025robust}. By retaining the layer-wise structure of the NN, the resulting SDP formulations preserve detailed information about the interaction between the NN nonlinearity and the surrounding dynamics \cite{yin2021stability,richardson2023strengthened,hashemi2021incremental}. Such formulations can yield expressive certificates for closed-loop stability, robustness, and, in some settings, input--output performance \cite{Sahel2024,desouza2023event}. The main limitation of such full-order formulations is scalability: the dimensions of their semidefinite constraints typically grow with the total number of neurons, leading to rapidly increasing computational and memory requirements for deep and wide networks.

A natural scalable alternative is to summarize the NN by a global Lipschitz bound and combine it with a small-gain argument. Significant progress has been made in scalable Lipschitz constant estimation for deep NNs through sparsity exploitation, structured decompositions, first-order optimization, and compositional constructions \cite{xue2022chordal,xue2024chordal,wang2024scalability,xueclipse,syed2025improved}. When such estimates are used in small-gain-based closed-loop analysis,
the resulting verification conditions can have dimensions that are
independent of the internal depth of the NN \cite{Wang2025ScalableDF},
making them applicable to very deep networks. This scalability, however, comes at the cost of conservatism: replacing the detailed NN description by a single global gain discards structural information about the coupling between the plant dynamics and the NN. Existing methods therefore exhibit a fundamental tradeoff between expressive full-order SDP certificates that preserve detailed plant--NN coupling but scale poorly, and highly scalable small-gain certificates that discard much of this coupling and may be substantially more conservative.

This tradeoff becomes more pronounced when the NN is only one component of a larger uncertain feedback interconnection. Practical systems may additionally contain actuator uncertainty, modeling error, unmodeled dynamics, or other uncertain nonlinear or dynamical components \cite{zhou1996robust,skogestad2005multivariable}. IQC-based descriptions provide a natural mechanism for incorporating uncertain and nonlinear components into a common stability and performance analysis framework. Related work has analyzed NN feedback systems with additional
perturbations using quadratic constraints and IQCs
\cite{yin2021stability,desouza2023event}, considered robust stability
under structured model uncertainty \cite{zhang2025robust}, and used
IQCs and dissipativity-based formulations to certify closed-loop
robustness and performance \cite{junnarkar2026dissipativity}.
 However, directly combining detailed NN descriptions with additional unmodeled dynamics generally retains the scalability limitations of full-order SDP formulations, whereas collapsing the NN to a global gain can sacrifice useful structural coupling.

These observations motivate a scalable verification framework that preserves informative plant--NN coupling, accommodates additional IQC-described uncertainties, and avoids the conservatism of a purely gain-based reduction. To address this gap, this paper develops a unified framework for incremental stability and performance analysis of uncertain NN feedback interconnections. The main contributions are summarized as follows:

\begin{itemize}
\item We develop a scalable verification framework for feedback interconnections involving high-dimensional NNs and uncertainties, enabling a systematic tradeoff between computational and memory efficiency and the conservatism of the resulting stability and performance certificates.

\item The framework provides sufficient conditions for incremental
convergence and finite incremental $\ell_2$ gain and, under
additional assumptions, for boundedness of closed-loop trajectories.

\item We show that, under an additional assumption ensuring the existence of a convergent trajectory, incremental convergence implies global convergence to a common limit point. We further provide a counterexample demonstrating that such pointwise convergence cannot, in general, be guaranteed without this assumption.
\end{itemize}

The remainder of this paper is organized as follows.
Section~\ref{sec:preliminiries} introduces the problem formulation.
Section~\ref{sec:background} develops the full-order incremental IQC
framework, including incremental IQCs, dissipation-based analysis,
and the incremental small-gain method.
Section~\ref{scalablesection} reviews scalable Lipschitz constant estimation
methods for deep NNs.
Section~\ref{sec:normal} develops the proposed scalable verification framework
and an alternating update scheme for reducing conservatism.
Section~\ref{sec:sufficient_condition} discusses further implications and
limitations of the proposed sufficient conditions.
Finally, Section~\ref{sec:num} presents the numerical results.

%% file: Problem_Formulation_modified.tex
\section{Problem Formulation}
\label{sec:preliminiries}

\vspace{0.1in}

% \begin{figure}[t!]
% \centering
% \scalebox{0.9}{
% \begin{picture}(180,120)(20,-20)
%  \thicklines
%  \put(80,25){\framebox(30,30){$G$}}
% % I/O for Delta
% % \put(80,75){\dashbox(30,30){$\Delta$}}
%  \put(80,70){\framebox(30,30){$\Delta_{\text{NN}}$}}
%   \put(80,-20){\framebox(30,30){$\Delta_{U}$}}
%  \put(42,64){$v$}
%  \put(42,9){$p$}
%  \put(55,50){\line(1,0){25}}  
%   \put(80,40){\vector(-1,0){40}} 
% \put(55,50){\line(0,1){35}}
% \put(55,30){\line(0,-1){35}}
% \put(55,-5){\vector(1,0){25}}
% \put(110,-5){\line(1,0){25}}
% \put(135,-5){\line(0,1){35}}
%  \put(55,30){\line(1,0){25}} 
%  \put(55,85){\vector(1,0){25}}  
%  \put(143,64){$w$}
%  \put(143,9){$q$}
%  \put(135,85){\line(-1,0){25}}  
%  \put(135,50){\line(0,1){35}} 
%   \put(150,40){\vector(-1,0){40}} 
%   \put(155,38){$d$} 
%   \put(30,38){$e$} 
%  \put(135,50){\vector(-1,0){25}}  
%   \put(135,30){\vector(-1,0){25}}
% \end{picture}
% } % End scalebox
% \caption{Block diagram of the feedback interconnection $F(G,\Delta_{\mathrm{NN}},\Delta_U)$ with exogenous input $d$ and output $e$}
% %\label{fig:fdbd-uncertainty}
% \end{figure}

\begin{figure}[h!t]
\centering
\begin{picture}(110,90)(40,20)
 \thicklines
 \put(75,25){\framebox(40,40){$G$}}
 \put(143,40){$d$}
 \put(150,35){\vector(-1,0){35}}  
 \put(42,40){$e$}
 \put(75,35){\vector(-1,0){35}}  
 \put(75,25){\framebox(40,40){$G$}}
 % \put(76,71){\framebox(38,38){
 %    $\begin{matrix}
 %    \Delta_\text{NN} &  \\
 %      & \Delta_U
 %    \end{matrix}$
 %    }}
 \put(76,71){\framebox(38,38){}}
 \put(78,95){$\Delta_{\text{NN}}$}
 \put(100,80){$\Delta_{\text{U}}$}
 \put(35,75){$\bmtx v \\ p \emtx$}
 \put(55,55){\line(1,0){20}}  
 \put(55,55){\line(0,1){35}}  
 \put(55,90){\vector(1,0){21}}  
 \put(138,75){$\bmtx w \\ q \emtx$}
 \put(135,90){\line(-1,0){21}}  
 \put(135,55){\line(0,1){35}}  
 \put(135,55){\vector(-1,0){20}}  
\end{picture}
\caption{Block diagram of the feedback interconnection $F(G,\Delta_{\mathrm{NN}},\Delta_\text{U})$ with exogenous input $d$ and performance output $e$.}
\label{fig:fdbd-uncertainty}
\end{figure}

\subsection{Notation}
Let $\mathbb{Z}_{\ge 0}$ denote the set of non-negative integers, $[N] := \{1,\dots,N\}$, and $\mathbb{R}^n$ the set of real vectors of dimension $n$. The Euclidean (or $2$-) norm for a vector $x\in \R^n$ is defined by $\|x\|_2:= \sqrt{ x^\tp x }$. Let $\ell^n_{2e}$ denote the set of sequences $v = \{v_k\}_{k=0}^\infty$, where $v_k \in \mathbb{R}^n$.  
The $\ell_2$-norm of a sequence $v \in \ell^n_{2e}$ is defined as $\|v\|_2 := \left( \sum_{k=0}^{\infty} \|v_k\|_2^2 \right)^{1/2}$. 
The space $\ell_{2}^{n}$ consists of sequences $v \in \ell_{2e}^n$ such that $\|v\|_2 < \infty$. 
 Next, $\mathbb{R}^{n\times m}$ denotes the set of $n\times m$ real matrices. 
The $n\times n$ identity matrix is denoted by $I_n$, while $0_n$ and $0_{n\times m}$ denote the $n\times n$ and $n\times m$ zero matrices, respectively. The maximum singular value (spectral norm) for any matrix $A\in \R^{n\times m}$ is denoted by $\|A\|_2$.  The eigenvalues of  a square symmetric matrix $A\in \R^{n\times n}$ are real and $\lambda_{\min}(A)$ denotes the smallest eigenvalue. The operator $\operatorname{diag}(A,B)$ denotes the block-diagonal matrix with matrices $A$ and $B$ on its diagonal.  Finally, $\mathbb{RH}_\infty$ denotes the set of real-rational, stable, and proper transfer functions.

\subsection{System Model and Objectives}

Consider the uncertain discrete-time feedback interconnection $F(G,\Delta_{\mathrm{NN}},\Delta_\text{U})$ in Fig. \ref{fig:fdbd-uncertainty}, consisting of an LTI plant $G$, a NN nonlinearity $\Delta_{\mathrm{NN}}$, and an uncertain perturbation $\Delta_\text{U}$.
The dynamics of $G$ are described by
\begin{align}\label{eq:uncertain_sys}
\begin{split}
x_{k+1} &= A_G \, x_k \,\, + B_{G1} \,  w_k
  \,\,\,\,  + B_{G2} \, q_k 
  \,\,\,\,\,\, + B_{G3} \,  d_k \\ 
v_k &= C_{G1} \,  x_k + \phantom{{}D_{G11}w_k +}   D_{G12} \,  q_k + \, D_{G13}\,  d_k\\
p_k &= C_{G2} \,  x_k + D_{G21} \, w_k  + D_{G22} q_k + D_{G23}\,  d_k \\
e_k &= C_{G3} \,  x_k + D_{G31}\,  w_k + D_{G32}\,  q_k + D_{G33}\,  d_k\\
\end{split}
 \end{align}
for all $k \in \mathbb{Z}_{\ge 0}$,
where $x_k \in \mathbb{R}^{n_x}$ is the state, 
$(p_k,q_k) \in \mathbb{R}^{n_p} \times \mathbb{R}^{n_q}$ denote the input-output pair of $\Delta_\text{U}$, 
$(v_k,w_k) \in \mathbb{R}^{n_v} \times \mathbb{R}^{n_w}$ denote the input-output pair of $\Delta_{\mathrm{NN}}$, 
$d_k \in \mathbb{R}^{n_d}$ is the
exogenous input, and 
$e_k \in \mathbb{R}^{n_e}$ is the performance output. We next state a standing assumption on the perturbation
$\Delta_\text{U}$.

\begin{assumption}\label{assumption1}
For every admissible perturbation $\Delta_{\operatorname{U}}:\ell_{2e}^{n_p}\rightarrow\ell_{2e}^{n_q}$, the feedback interconnection $F(G,\Delta_{\rm NN},\Delta_{\operatorname{U}})$ is well posed, i.e., for every admissible initial condition and exogenous input, the feedback equations admit a unique trajectory. Moreover, $\Delta_{\operatorname{U}}$ is causal and satisfies $\Delta_{\operatorname{U}}(0)=0$.
\end{assumption}

The perturbation can represent a broad class of uncertainties \cite{megretski1997system}, including actuator saturation and other slope-restricted nonlinearities, as well as time-delay effects and unmodeled dynamics.

The NN block $\Delta_{\mathrm{NN}}$ is a feedforward network with $N$  layers. 
The input–output relation of $\Delta_{\mathrm{NN}}$ is given by
\begin{align}\label{eq:Delta}
\begin{split}
h_k^0 &= v_k,\\
h_k^l &= \phi\!\left(W_l h_k^{l-1} + b_l\right), \quad l \in [N],\\
w_k &= h_k^N .
\end{split}
\end{align}
For each layer $l \in [N]$, $W_l \in \mathbb{R}^{n_l \times n_{l-1}}$ and $b_l \in \mathbb{R}^{n_l}$ denote the weight matrix and bias vector. 
The layer dimensions satisfy $n_0 = n_v$ and $n_N = n_w$, and the total number of neurons is $n_\phi := \sum_{l=1}^N n_l$.
In the NN architecture considered here, the output of the last nonlinear
layer is directly taken as the NN output, i.e., $w_k=h_k^N$, and no
additional output layer is included. If an additional linear output layer
with weight matrix $W_{N+1}$ is present, its weight can, without loss of
generality, be absorbed into $B_{G1}$, $D_{G21}$, and $D_{G31}$.
An output bias is not considered. The activation function $\phi$ acts componentwise on each layer and is
assumed to be slope-restricted in $[0,1]$ and to satisfy $\phi(0)=0$.
These assumptions are satisfied by several standard activation functions,
including ReLU, tanh, and leaky ReLU with a leak coefficient in $[0,1]$.

When the bias terms $\{b_l\}_{l=1}^N$ and the external input $d$ are zero,
the origin is an equilibrium point of \eqref{eq:uncertain_sys}.
In the presence of nonzero biases, however, the equilibrium may shift or may
not exist, making stability analysis with respect to a fixed operating point
inapplicable.  
We therefore adopt an incremental viewpoint and study pairs of system
trajectories. Our objective is to characterize both trajectory convergence
and sensitivity to input perturbations. The latter is quantified via the
notion of incremental $\ell_2$ gain, which captures the input–output behavior
between pairs of trajectories and does not rely on a specific equilibrium.
We next define the incremental $\ell_2$ gain.

\begin{definition}[Incremental $\ell_2$ gain \cite{9029867}]\label{def:incrementalL2gain}
The system \eqref{eq:uncertain_sys} is said to have an incremental $\ell_2$-gain bound $\sqrt{\gamma}$ if, for any input pair $d,d'\in\ell_2^{n_d}$ and any initial conditions $x_0, x_0' \in \mathbb{R}^{n_x}$, the corresponding trajectories satisfy, for all $K \in \mathbb{Z}_{\ge 0}$,
\[
\sum_{k=0}^K \|e_k - e_k'\|_2^2 \le \gamma \sum_{k=0}^K \|d_k - d_k'\|_2^2 + \chi(x_0, x_0'),
\]
where $\chi : \mathbb{R}^{n_x} \times \mathbb{R}^{n_x} \to \mathbb{R}_{\ge 0}$ is a nonnegative function satisfying $\chi(x,x)=0$.
\end{definition}

Our objective is to derive scalable and verifiable sufficient conditions under which:
\begin{itemize}
    \item the closed-loop system is incrementally convergent and its state trajectories remain bounded; and
    \item the interconnection $F(G,\Delta_{\mathrm{NN}},\Delta_U)$ admits a finite incremental $\ell_2$-gain bound $\sqrt{\gamma}$.
\end{itemize}
We also identify additional conditions under which incremental convergence
implies convergence of all state trajectories to a common limit point.

%% file: IQC.tex
\section{Incremental IQC-Based Stability And Performance Analysis}\label{sec:background}

This section develops the full-order incremental IQC certificate used in our
analysis. We first introduce incremental IQCs to characterize the incremental
input--output behavior of the NN and the uncertain perturbation, and then
review the dissipation-based and incremental small-gain conditions used to
analyze the resulting interconnection.

\subsection{Incremental IQCs}\label{incremental IQCs}

We first characterize the incremental behavior of the NN nonlinearity $\Delta_{\mathrm{NN}}$.
For feedforward NNs with activation functions that are slope-restricted in $[0,1]$, the following structured $\mathsf\Lambda$-incremental QC characterization holds.

\paragraph{$\mathsf\Lambda$-incremental QC}
For each $l\in[N]$, let
$h^l = \phi(W_l h^{l-1} + b_l)$
denote the output of layer $l$ as defined in~\eqref{eq:Delta}, where $h^0 = v$. 
We then define the stacked vector $\bold h := \begin{bmatrix}(h^1)^\tp & \cdots &(h^N)^\tp\end{bmatrix}^\tp$,
corresponding to the input $v$. Similarly, $\bold h'$ corresponds to the input $v'$.
Then, for any $v,v'\in\mathbb{R}^{n_v}$ and any collection of diagonal positive semidefinite matrices
$\mathsf{\Lambda}:=\{\Lambda_l\}_{l=1}^N$ with $\Lambda_l\in\mathbb{R}^{n_l\times n_l}$,
the following inequality holds:
\begin{equation}\label{eq:Lambda-IQC}
\bmat{\delta v\\ \delta \bold h}^\tp
\mM_{\mathrm{NN}}(\mathsf\Lambda)
\bmat{\delta v\\ \delta \bold h} \ge 0,
\end{equation}
where $\delta v = v - v^\prime$, $\delta \bold h = \bold h - \bold h^\prime$, and
\begin{align}
\nonumber
 \mM_{\mathrm{NN}}(\mathsf{\Lambda})=\bmat{
    0             & W_1^{\tp}\Lambda_1 & 0              & \cdots & 0                   \\
\Lambda_1 W_1 & -2\Lambda_1         & W_2^{\tp}\Lambda_2 & \ddots & \vdots              \\
0             & \Lambda_2 W_2       & -2\Lambda_2    & \ddots & 0                  \\
\vdots        & \ddots              & \ddots         & \ddots & W_N^{\tp}\Lambda_N \\
0             & \cdots              & 0              & \Lambda_N W_N & -2\Lambda_N }.   
\end{align}
The $\mathsf\Lambda$-incremental QC characterizes the
incremental input--output behavior of the feedforward NN
through a structured QC that explicitly
captures the layer-wise interconnection structure of the
network.
To characterize the uncertain perturbation $\Delta_\text{U}$ in a unified incremental framework,
we adopt the following incremental $\rho$-hard IQC description.

\paragraph{Incremental $\rho$-Hard IQC and System Representation}\label{incremental hard IQCs}

We adopt an incremental $\rho$-hard IQC to characterize the uncertain perturbation $\Delta_\text{U}$.

\begin{definition}[Incremental $\rho$-hard IQC]
Let $\rho \in (0,1]$, 
$\Psi \in \mathbb{RH}_\infty^{n_r \times (n_p+n_q)}$, 
and $M = M^\tp \in \mathbb{R}^{n_r \times n_r}$ be given.
An operator $\Delta_{\operatorname{U}} : \ell_{2e}^{n_p} \to \ell_{2e}^{n_q}$
satisfying Assumption~\ref{assumption1}
is said to satisfy the incremental $\rho$-hard IQC
defined by $(\Psi,M)$ if, for any
$p,p^\prime \in \ell_{2e}^{n_p}$ with
$q = \Delta_{\operatorname{U}}(p)$ and
$q^\prime = \Delta_{\operatorname{U}}(p^\prime)$,
the incremental signals
$\delta p := p-p^\prime$,
$\delta q := q-q^\prime$
satisfy
\begin{equation}\label{eq:inc-hard-IQC}
\sum_{k=0}^{K}
\rho^{-k}\,
\delta r_k^\tp M \delta r_k
\ge 0,
\qquad
\forall K \in \mathbb{Z}_{\ge 0},
\end{equation}
where $\delta r
=
\Psi
\begin{bmatrix}
\delta p\\
\delta q
\end{bmatrix}$
with zero initial condition.
\end{definition}

\begin{remark}[Monotonicity in $\rho$ \cite{Lessard2014}]\label{remarkrho}
If $\Delta_{\operatorname{U}}$ satisfies the incremental $\rho_1$-hard IQC defined by $(\Psi,M)$ for some $\rho_1 \in (0,1]$,
then it also satisfies the incremental $\rho$-hard IQC for any $\rho \in [\rho_1,1]$.
\end{remark}

In this work, we characterize the uncertain perturbation $\Delta_\text{U}$ using time-domain incremental $\rho$-hard IQCs. Specifically, $\Delta_\text{U}$ is assumed to satisfy a collection of IQCs defined by $\{(\Psi_i,M_i)\}_{i=1}^{N_\Psi}$.
The corresponding filters are aggregated into a single filter $\Psi$ with the following state-space realization:
\begin{align}\label{filter}
\begin{split}
    \psi_{k+1} &= A_\Psi \psi_k + B_{\Psi 1} p_k + B_{\Psi 2} q_k\\
    r_k &= C_\Psi \psi_k + D_{\Psi 1} p_k + D_{\Psi 2} q_k\\
    \psi_0 &= 0,
\end{split}
\end{align}
where $\psi_k \in \mathbb{R}^{n_\psi}$ denotes the filter state and
$r_k \in \mathbb{R}^{n_r}$ denotes the aggregated output.
In particular, $r_k := \bmat{r^{(1)\tp}_k & \ldots & r_k^{(N_\Psi)\tp}}^\tp$, where
$r_k^{(i)}$ is the output of the individual filter $\Psi_i$. Define the corresponding incremental filtered outputs by
$\delta r_k^{(i)}
:=
r_k^{(i)}-{r_k^{(i)}}'$,
$i\in[N_\Psi]$ and $\delta r_k := \bmat{\delta r^{(1)\tp}_k & \ldots & \delta r_k^{(N_\Psi)\tp}}^\tp$. We further define the block-diagonal multiplier
$M(\alpha) := \operatorname{diag}(\alpha_1 M_1,\ldots,\alpha_{N_\Psi} M_{N_\Psi})$,
where $\alpha := \{\alpha_i\}_{i=1}^{N_\Psi}$ are nonnegative scalars.
Then the incremental filtered output $\delta r_k$ satisfies
\begin{equation}\label{eq:multi-hard-iqc}
\sum_{k=0}^{K} \rho^{-k} \, \delta r_k^\tp M(\alpha) \delta r_k
= \sum_{i=1}^{N_\Psi} \alpha_i \sum_{k=0}^{K} \rho^{-k}\, \delta r_k^{(i)\tp} M_i \delta r_k^{(i)} \ge 0.
\end{equation}

Let $\zeta_k := \bmat{x^\tp_k & \psi^\tp_k}^\tp \in \mathbb{R}^{n_x+n_\psi}$ denote the extended state vector.
Then, the dynamics of the uncertain interconnected system can be rewritten as \begin{align}\label{unsys}
    \begin{split}
        \bmat{\zeta_{k+1}\\r_k\\e_k} & = \bmat{A&B_1&B_2&B_3\\
        C_1&D_{11}&D_{12}&D_{13}\\
        C_2 & D_{21} & D_{22} & D_{23}} \bmat{\zeta_k\\w_k\\q_k\\d_k},
    \end{split}
\end{align}
where the state space matrices are
\begin{align}
\begin{split}
\nonumber
    &A  = \bmat{A_G & 0\\
    B_{\Psi1}C_{G2} & A_\Psi},\quad B_1 = \bmat{B_{G1}\\B_{\Psi1}D_{G21}}, \\ &B_2  = \bmat{B_{G2}\\ B_{\Psi1}D_{G22} + B_{\Psi2}},\quad B_3  = \bmat{B_{G3}\\B_{\Psi1}D_{G23}},\\
    &C_1  = \bmat{D_{\Psi1}C_{G2}&C_{\Psi}},\quad D_{11} = D_{\Psi1}D_{G21},\\ 
    &D_{12}  =  D_{\Psi1}D_{G22} +  D_{\Psi2},\quad D_{13} = D_{\Psi1}D_{G23},\\ &C_2 = \bmat{C_{G3} & 0}, \quad
    D_{21} = D_{G31},\\ & D_{22} = D_{G32},\quad D_{23} = D_{G33}.
\end{split}
\end{align}

\subsection{Dissipation Analysis}

We integrate the incremental characterizations developed in the previous subsections into a dissipation-based analysis.
This leads to a large-scale SDP sufficient condition for
incremental convergence and for certifying an upper bound on
the incremental $\ell_2$ gain.

For convenience in the subsequent LMI formulation, define matrices
$\mathbf A(P,\gamma,\alpha)$, $\mathbf
 B(P,\alpha)$, $\mathbf C(P,\alpha)$
as follows:
\begin{align}
\begin{split}
\nonumber
    \mathbf{A}(P,\gamma,\alpha)
    :=
    &\bmat{
    A^\tp P A - P & A^\tp P B_2 & A^\tp P B_3 \\
    B_2^\tp P A & B_2^\tp P B_2 & B_2^\tp P B_3 \\
    B_3^\tp P A & B_3^\tp P B_2 & B_3^\tp P B_3 - \gamma I_{n_d}
    }
    \notag\\
    &+
    \bmat{
    C_1^\tp M(\alpha) C_1 &
    C_1^\tp M(\alpha) D_{12} &
    C_1^\tp M(\alpha) D_{13} \\
    D_{12}^\tp M(\alpha) C_1 &
    D_{12}^\tp M(\alpha) D_{12} &
    D_{12}^\tp M(\alpha) D_{13} \\
    D_{13}^\tp M(\alpha) C_1 &
    D_{13}^\tp M(\alpha) D_{12} &
    D_{13}^\tp M(\alpha) D_{13}
    }
    \notag\\
    &+
    \bmat{
    C_2^\tp C_2 &
    C_2^\tp D_{22} &
    C_2^\tp D_{23} \\
    D_{22}^\tp C_2 &
    D_{22}^\tp D_{22} &
    D_{22}^\tp D_{23} \\
    D_{23}^\tp C_2 &
    D_{23}^\tp D_{22} &
    D_{23}^\tp D_{23}
    },
    \notag\\[1mm]
    \mathbf{B}(P,\alpha)
    :=
    &
    \bmat{
    A^\tp P B_1 \\
    B_2^\tp P B_1 \\
    B_3^\tp P B_1
    }
    +
    \bmat{
    C_1^\tp M(\alpha) D_{11} \\
    D_{12}^\tp M(\alpha) D_{11} \\
    D_{13}^\tp M(\alpha) D_{11}
    }
    +
    \bmat{
    C_2^\tp D_{21} \\
    D_{22}^\tp D_{21} \\
    D_{23}^\tp D_{21}
    },
    \notag\\[1mm]
    \mathbf{C}(P,\alpha)
    :=
    &
    B_1^\tp P B_1
    +
    D_{11}^\tp M(\alpha) D_{11}
    +
    D_{21}^\tp D_{21}
    \end{split}
\end{align}

In addition, to simplify the subsequent analysis, we introduce an equivalent NN representation
$\Delta'_{\mathrm{NN}}$
with augmented input $\xi_k := \begin{bmatrix}
\zeta_k^\tp & q_k^\tp & d_k^\tp
\end{bmatrix}^\tp \in \mathbb{R}^{m},\
m := n_x+n_\psi+n_q+n_d$:
\begin{align}\label{eq:Delta2}
\begin{split}
h_k^0 &= \xi_k,\\
h_k^1 &= \phi\!\left(\widetilde{W}_1 h_k^{0} + b_1\right),\\
h_k^l &= \phi\!\left(W_l h_k^{l-1} + b_l\right), \quad l = 2,\dots,N,\\
w_k &= h_k^N,
\end{split}
\end{align}
where $\widetilde{W}_1
    :=
    W_1
    \bmat{
        C_{G1} &
        0_{n_v \times n_\psi} & 
        D_{G12} &
        D_{G13}
    }$, which incorporates the affine relation $v_k =  \bmat{
        C_{G1} &
        0_{n_v \times n_\psi} & 
        D_{G12} &
        D_{G13}
    }\xi_k$
into the first layer. Compared with $\Delta_{\text{NN}}$ \eqref{eq:Delta}, $\Delta_{\text{NN}}'$ \eqref{eq:Delta2} differs only in its first layer through the use of $\widetilde{W}_1$. The remaining layers are identical.

We next state the resulting large-scale SDP condition.

\begin{theorem}\label{Theorem:largeSDP1}
Let $G$ be the LTI plant defined by \eqref{eq:uncertain_sys}, 
let $\Delta_{\mathrm{NN}}$ denote the NN defined in \eqref{eq:Delta}, 
and let
$\Delta_{\operatorname{U}} : \ell_{2e}^{n_p} \to \ell_{2e}^{n_q}$
be an uncertain perturbation satisfying Assumption~\ref{assumption1}
and the incremental $\rho$-hard IQCs defined by
$\{(\Psi_i,M_i)\}_{i=1}^{N_\Psi}$.
Suppose that there exist a positive definite matrix $P$, a scalar $\gamma > 0$, nonnegative scalars $\alpha := \{\alpha_i\}_{i=1}^{N_\Psi}$, and diagonal positive semidefinite matrices $\mathsf{\Lambda}:=\{\Lambda_l\}_{l=1}^N$ such that the following LMI condition holds:
\begin{align}\label{eq:robust-condition}
\widetilde{\mathsf{M}}_{\operatorname{NN}}(\mathsf\Lambda)
    +
    \mathsf{M}_G(P,\gamma,\alpha)
    \prec 0,
 \end{align}
 where 
 \begin{align}\label{tildeNN}
&\widetilde{\mathsf{M}}_{\operatorname{NN}}(\mathsf\Lambda)
=
\scalebox{0.9}{$
\bmat{
0_{m}
&
\widetilde{W}_1^\tp \Lambda_1
&
0
&
\cdots
&
0
\\
\Lambda_1 \widetilde{W}_1
&
-2\Lambda_1
&
W_2^\tp \Lambda_2
&
\ddots
&
\vdots
\\
0
&
\Lambda_2 W_2
&
-2\Lambda_2
&
\ddots
&
0
\\
\vdots
&
\ddots
&
\ddots
&
\ddots
&
W_N^\tp \Lambda_N
\\
0
&
\cdots
&
0
&
\Lambda_N W_N
&
-2\Lambda_N
},
$} \\
  &\mathsf{M}_G(P, \gamma,\alpha)=\bmat{\mathbf{A}(P,\gamma,\alpha) & 0 & \cdots & \mathbf{B}(P,\alpha) \\ 0 & 0 & \cdots & 0\\ \vdots & \vdots & \ddots & \vdots \\ \mathbf{B}(P,\alpha)^\tp  & 0 & \cdots & \mathbf{C}(P,\alpha)}.\label{ABC}
\end{align}
Then 
\begin{itemize}
    \item  If $\rho \in (0,1]$ and the external input $d = 0$, then the closed-loop trajectories are incrementally convergent, i.e.,
for any two trajectories initialized from $x_0,x_0' \in \mathbb{R}^{n_x}$, $\|x_k-x_k'\|_2 \to 0 \ \text{as } k\to\infty$.
    \item If $\rho \in (0,1]$, then the incremental $\ell_2$ gain of the system is bounded by $\sqrt{\gamma}$.
\end{itemize}
\end{theorem}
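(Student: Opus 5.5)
The plan is a standard incremental dissipation argument, applied to the difference of two closed-loop trajectories. Fix two trajectories of $F(G,\Delta_{\mathrm{NN}},\Delta_{\rm U})$ driven by $(x_0,d)$ and $(x_0',d')$. Both exist and are unique by Assumption~\ref{assumption1}. Attach to each the filter state $\psi$ from \eqref{filter}, started at $\psi_0=\psi_0'=0$. Because the extended system \eqref{unsys} and the filter $\Psi$ are linear, the increments $\delta\zeta_k$, $\delta r_k$, $\delta e_k$ obey \eqref{unsys} with inputs $(\delta w_k,\delta q_k,\delta d_k)$. Moreover $\delta r=\Psi[\delta p;\delta q]$ with zero initial condition, exactly as the IQC definition requires. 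Set $\delta\xi_k:=[\delta\zeta_k^\tp\ \delta q_k^\tp\ \delta d_k^\tp]^\tp$ and let $\delta\mathbf h_k$ be the stacked increment of the hidden layers of $\Delta'_{\mathrm{NN}}$ in \eqref{eq:Delta2}. Since $v_k$ is an affine function of $\xi_k$ with the same matrix for both trajectories, the $\mathsf\Lambda$-incremental QC \eqref{eq:Lambda-IQC} applies to $\Delta'_{\mathrm{NN}}$ with $W_1$ replaced by $\widetilde W_1$. This gives $[\delta\xi_k;\delta\mathbf h_k]^\tp\widetilde{\mathsf M}_{\rm NN}(\mathsf\Lambda)[\delta\xi_k;\delta\mathbf h_k]\ge 0$, where the last block of $\delta\mathbf h_k$ is $\delta w_k$.

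Next I would convert the strict LMI into a one-step inequality. By strictness of \eqref{eq:robust-condition}, there is $\epsilon>0$ with $\widetilde{\mathsf M}_{\rm NN}+\mathsf M_G\preceq-\epsilon I$. Pre- and post-multiply by $[\delta\xi_k;\delta\mathbf h_k]$. By the definitions of $\mathbf A,\mathbf B,\mathbf C$, and because $\mathsf M_G$ acts only on $(\delta\xi_k,\delta w_k)$, the $\mathsf M_G$ term expands to
\[
\delta\zeta_{k+1}^\tp P\delta\zeta_{k+1}-\delta\zeta_k^\tp P\delta\zeta_k+\delta r_k^\tp M(\alpha)\delta r_k+\|\delta e_k\|_2^2-\gamma\|\delta d_k\|_2^2 .
\]
Dropping the nonnegative NN quadratic form then yields
\[
V_{k+1}-V_k+\delta r_k^\tp M(\alpha)\delta r_k+\|\delta e_k\|_2^2-\gamma\|\delta d_k\|_2^2\le-\epsilon\|\delta\zeta_k\|_2^2,
\]
where $V_k:=\delta\zeta_k^\tp P\delta\zeta_k$.

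I would then sum this from $k=0$ to $K$. The IQC weighting $\rho^{-k}$ does not match the unweighted dissipation inequality; this is the main point needing care. I would handle it with Remark~\ref{remarkrho}: since $\rho\le1$, each $\Delta_{\rm U}$ also satisfies the incremental hard IQC with $\rho=1$. Hence \eqref{eq:multi-hard-iqc} gives $\sum_{k=0}^K\delta r_k^\tp M(\alpha)\delta r_k\ge0$ for every $K$, and this term can be dropped from the summed inequality. Using $P\succ0$ and $\delta\zeta_0=[\delta x_0^\tp\ 0]^\tp$, we obtain
\[
\sum_{k=0}^K\|\delta e_k\|_2^2+\epsilon\sum_{k=0}^K\|\delta\zeta_k\|_2^2\le\gamma\sum_{k=0}^K\|\delta d_k\|_2^2+\delta x_0^\tp P_{11}\delta x_0,
\]
where $P_{11}$ is the $x$-block of $P$.

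Both claims follow from this inequality. For the gain bound, take $\chi(x_0,x_0'):=(x_0-x_0')^\tp P_{11}(x_0-x_0')$. It is nonnegative and vanishes on the diagonal, so Definition~\ref{def:incrementalL2gain} holds with bound $\sqrt\gamma$. For convergence, set $d=d'=0$. Then $\epsilon\sum_{k=0}^\infty\|\delta\zeta_k\|_2^2\le\delta x_0^\tp P_{11}\delta x_0<\infty$, so $\delta\zeta_k\to0$ and in particular $\|x_k-x_k'\|_2\to0$. The remaining points to verify are bookkeeping: that the block placement of $\mathbf B$ in \eqref{ABC} correctly pairs with $\delta w_k$, the last component of $\delta\mathbf h_k$, and that the zero-initial-condition requirement of the IQC is met by the choice $\psi_0=\psi_0'=0$.
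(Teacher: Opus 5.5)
Your proposal is correct and follows essentially the same incremental dissipation argument as the paper: pre- and post-multiply the strict LMI by the stacked increment, drop the nonnegative $\mathsf\Lambda$-incremental QC term, and sum while using Remark~\ref{remarkrho} to invoke the hard IQC with $\rho=1$. The differences are cosmetic. You use a $-\epsilon I$ margin so that one summed inequality gives both the gain bound and $\sum_k\|\delta\zeta_k\|_2^2<\infty$, whereas the paper adds $\epsilon\operatorname{diag}(P,0)$ for convergence and treats the gain bound separately; you also make explicit that $\chi(x_0,x_0')=\delta x_0^\tp P_{11}\delta x_0$ because $\psi_0=\psi_0'=0$.
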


A brief derivation and interpretation of the LMI structure in
\eqref{eq:robust-condition} are provided in Appendix~\ref{whyLMI}. Note that the dimension of 
$\widetilde{\mM}_{\text{NN}}(\mathsf{\Lambda})$ grows with the depth of the NN,
which renders the resulting SDP computationally and memory intensive,
and therefore not scalable to deep NNs.
In addition, applying a chordal sparsity decomposition to
\eqref{eq:robust-condition} still yields an SDP condition whose
size scales with the depth of the NN component
\cite{xue2022chordal,xue2024chordal}.

\subsection{Incremental Small Gain Method}

In addition to the dissipation-based condition in Theorem 1,
the incremental small-gain method provides an alternative condition for establishing incremental convergence and incremental $\ell_2$-gain bounds.

\begin{proposition}[Incremental small gain method]\label{proposition2} Under the assumptions of Theorem~\ref{Theorem:largeSDP1}, suppose that $\sqrt{L}$ is an upper bound on the
Lipschitz constant of $\Delta_{\mathrm{NN}}'$ \eqref{eq:Delta2}, i.e.,
$\|w-w^\prime\|_2
\le
\sqrt{L}\|\xi-\xi^\prime\|_2$,
where
$w=\Delta'_{\mathrm{NN}}(\xi)$
and $w^\prime=\Delta'_{\mathrm{NN}}(\xi^\prime)$,
for all
$\xi,\xi^\prime\in\mathbb R^{m}$.
If there exist a positive definite matrix $P$, a scalar $\gamma > 0$,
a collection of nonnegative scalars $\alpha$
such that the following LMI condition holds:
\begin{align}\label{eq:smallgain}
\begin{split}
&\bmat{\mathbf{A}(P,\gamma,\alpha) &\mathbf{B}(P,\alpha)\\
\mathbf{B}(P,\alpha)^\tp &\mathbf{C}(P,\alpha)} + \operatorname{diag}(I_{m},-\tfrac{1}{L}I_{n_w}) \prec 0.
\end{split}
\end{align}
Then the conclusions of Theorem~\ref{Theorem:largeSDP1} follow.
\end{proposition}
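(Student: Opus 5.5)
We prove Proposition~\ref{proposition2} directly with a dissipation argument that replaces the layerwise $\mathsf\Lambda$-incremental QC by the single Lipschitz quadratic constraint.

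\textbf{Lipschitz constraint.} For any two trajectories set $\delta\xi_k:=\xi_k-\xi_k'$ and $\delta w_k:=w_k-w_k'$, where $\xi_k=[\zeta_k^\tp\ q_k^\tp\ d_k^\tp]^\tp$. Since $w_k=\Delta'_{\mathrm{NN}}(\xi_k)$ pointwise in time, the Lipschitz bound gives
\[
\bmat{\delta\xi_k\\ \delta w_k}^\tp \operatorname{diag}\!\big(I_m,-\tfrac1L I_{n_w}\big)\bmat{\delta\xi_k\\ \delta w_k}=\|\delta\xi_k\|_2^2-\tfrac1L\|\delta w_k\|_2^2\ge 0
\]
for every $k$.

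\textbf{Dissipation inequality.} Let $V(\delta\zeta)=\delta\zeta^\tp P\delta\zeta$ and strengthen the strict LMI \eqref{eq:smallgain} to $\preceq -\epsilon I$ for some $\epsilon>0$. Multiply it on both sides by $[\delta\xi_k^\tp\ \delta w_k^\tp]^\tp$ and use the incremental realization \eqref{unsys}, which is linear in the increments. By construction of $\mathbf A,\mathbf B,\mathbf C$ this gives
\[
V(\delta\zeta_{k+1})-V(\delta\zeta_k)+\delta r_k^\tp M(\alpha)\delta r_k+\|\delta e_k\|_2^2-\gamma\|\delta d_k\|_2^2+\|\delta\xi_k\|_2^2-\tfrac1L\|\delta w_k\|_2^2\le-\epsilon\|\delta\xi_k\|_2^2-\epsilon\|\delta w_k\|_2^2 .
\]
Dropping the nonnegative Lipschitz term leaves
\[
V(\delta\zeta_{k+1})-V(\delta\zeta_k)+\delta r_k^\tp M(\alpha)\delta r_k+\|\delta e_k\|_2^2-\gamma\|\delta d_k\|_2^2\le-\epsilon\|\delta\zeta_k\|_2^2 .
\]
This is exactly the pointwise inequality that appears in the proof of Theorem~\ref{Theorem:largeSDP1}. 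There the same inequality is obtained after using $\mathsf\Lambda$-incremental QC \eqref{eq:Lambda-IQC} (lifted to $\widetilde{\mM}_{\mathrm{NN}}$) to eliminate the hidden-layer terms. From this point the argument is identical to that theorem.

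\textbf{Summation and conclusions.} The step that needs care is combining the pointwise inequality with the $\rho$-hard IQC \eqref{eq:multi-hard-iqc}, which is only cumulative and $\rho$-weighted. I would handle it as in Theorem~\ref{Theorem:largeSDP1}. By Remark~\ref{remarkrho} it suffices to use $\rho=1$. Sum from $k=0$ to $K$ with $\delta\psi_0=0$, so that $V(\delta\zeta_0)=\delta x_0^\tp P_{11}\delta x_0$, and use $\sum_{k=0}^K\delta r_k^\tp M(\alpha)\delta r_k\ge 0$. This yields
\[
\sum_{k=0}^K\|\delta e_k\|_2^2+\epsilon\sum_{k=0}^K\|\delta\zeta_k\|_2^2\le\gamma\sum_{k=0}^K\|\delta d_k\|_2^2+V(\delta\zeta_0).
\]
The incremental $\ell_2$-gain bound follows with $\chi(x_0,x_0')=(x_0-x_0')^\tp P_{11}(x_0-x_0')$, which vanishes when $x_0=x_0'$. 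For $d=d'=0$, the same inequality gives $\sum_{k}\|\delta\zeta_k\|_2^2\le V(\delta\zeta_0)/\epsilon<\infty$, hence $\|\delta\zeta_k\|_2\to0$, and in particular $\|x_k-x_k'\|_2\to0$.

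Well-posedness (Assumption~\ref{assumption1}) guarantees that all signals involved are defined, and causality of $\Delta_{\mathrm U}$ justifies applying the IQC over each finite horizon.
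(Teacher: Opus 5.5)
Your proof is correct and follows essentially the same route as the paper: add an $\epsilon$-margin to the strict LMI, sandwich it with the increments to obtain the dissipation inequality, drop the nonnegative Lipschitz term $\|\delta\xi_k\|_2^2-\tfrac1L\|\delta w_k\|_2^2$, and then sum using the cumulative IQC with $\rho=1$, as in Theorem~\ref{Theorem:largeSDP1}. The differences are cosmetic: your $-\epsilon I$ margin (instead of $\epsilon\operatorname{diag}(P,0)$) yields both conclusions from one summed inequality, and you state explicitly that $\delta\psi_0=0$, which makes $\chi$ depend only on $(x_0,x_0')$.
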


\begin{remark}
Proposition~\ref{proposition2} applies the incremental small-gain method to the feedback interconnection between $\Delta'_{\mathrm{NN}}$ and an augmented system consisting of $G$, $\Delta_{\operatorname{U}}$, the exogenous input channel $d$, and the performance output channel $e$.

In the special case where $\Delta_{\operatorname{U}}$ and the external
channels are absent, the interconnection reduces to the
standard feedback loop between $G$ and
$\Delta_{\mathrm{NN}}$, where the NN input coincides with
the state (i.e., $v=x$), as considered in
\cite{Wang2025ScalableDF}.
In this case, Proposition~\ref{proposition2} specializes to the classical
incremental small-gain method (see Proposition~1 in
\cite{Wang2025ScalableDF}), and the LMI condition in
Proposition~\ref{proposition2} simplifies to
\begin{align}\label{eq:KYP}
\bmat{A_G^\tp P A_G - P & A_G^\tp P B_{G1}\\
      B_{G1}^\tp P A_G & B_{G1}^\tp P B_{G1}}
+ \operatorname{diag}(I_{n_v},-\tfrac{1}{L}I_{n_w})
\prec 0.
\end{align}
\end{remark}

The dimension of the LMI in Proposition~\ref{proposition2}
depends only on the state-space matrices in \eqref{unsys} and the
width of the last layer of the NN (i.e., $n_w$), and is independent of
the network depth.
However, it may be overly conservative, as it does not fully exploit the structural coupling between $G$ and $\Delta_{\mathrm{NN}}'$.
The required Lipschitz bound can be obtained using Lipschitz constant estimation algorithms,
which are introduced in the following section.

%% file: Scalable.tex
\section{Scalable Lipschitz Constant Estimation for Deep NNs}\label{scalablesection}

This section reviews scalable methods for certifying Lipschitz
bounds of deep NNs. We first introduce the LipSDP framework and
then discuss its scalable variants, which will be used in the
subsequent development of our scalable stability conditions.

\noindent \textbf{LipSDP.} 
For a $\Delta_{\text{NN}}$ as defined in \eqref{eq:Delta} with activation functions that are slope-restricted in $[0,1]$, the LipSDP framework
\cite{fazlyab2019efficient,pauli2021training,pauli2022neural}
certifies tight upper bounds on the Lipschitz constant and has inspired a large body of subsequent work on Lipschitz NNs
\cite{10179161,araujo2023a,wang2022,havens2023exploiting,pauli2023novel,barbara2024robust,wang2023direct}.
LipSDP certifies that the $\Delta_{\text{NN}}$ is $\sqrt{L}$-Lipschitz if there exists a scalar $L > 0$ and a collection of diagonal positive semidefinite matrices
$\mathsf{\Lambda}$, such that the following LMI condition holds:
\begin{align}\label{eq:LipSDP}
\mM_{\mathrm{NN}}(\mathsf{\Lambda})
+ \text{diag}(-I_{n_v},0_{n_\phi - n_w},\tfrac{1}{L}I_{n_w})
\prec 0.
\end{align}
Equivalently, LipSDP can be viewed as an optimization problem that minimizes $L$ subject to the matrix inequality~\eqref{eq:LipSDP}.
For a given NN with weights $\{W_l\}_{l=1}^N$, the constraint~\eqref{eq:LipSDP}
is affine in the decision variables $\mathsf{\Lambda}$ and $\lambda := 1/L$,
resulting in a SDP that maximizes $\lambda$ (equivalently, minimizes $L$). Since $\Delta'_{\mathrm{NN}}$ differs from $\Delta_{\mathrm{NN}}$ only in the first layer, the same LipSDP formulation applies after replacing $\mM_{\mathrm{NN}}(\mathsf{\Lambda})
+ \text{diag}(-I_{n_v},0_{n_\phi - n_w},\tfrac{1}{L}I_{n_w})$ with $\widetilde{\mM}_{\mathrm{NN}}(\mathsf{\Lambda})
+ \text{diag}(-I_{m},0_{n_\phi - n_w},\tfrac{1}{L}I_{n_w})$.
While LipSDP often provides tight Lipschitz upper bounds, the dimension of the associated LMI grows with the network depth. As a result, the resulting SDP can become computationally and memory intensive for deep NNs.

Motivated by the limited scalability of LipSDP, a variety of scalable methods
\cite{xueclipse,wang2024scalability,syed2025improved}
have been proposed. These methods construct structured feasible points for the LipSDP LMI \eqref{eq:LipSDP} to obtain efficient approximations of the Lipschitz bound. 

\vspace{0.5em}

\noindent\textbf{ECLipsE~\cite{xueclipse}.}
ECLipsE constructs a feasible point for the LipSDP LMI \eqref{eq:LipSDP} by solving $N$ small-scale SDPs.
The method initializes with $\Xi_1=-I_{n_v}$.
For each $l\in[N]$, the matrix $\Lambda_l$ is computed by solving the following SDP:
\begin{equation}\label{eq:ECLipsE-unified-SDP}
\begin{aligned}
\max_{c_l,\Lambda_l}\quad & c_l \\
\text{s.t.}\quad &
\begin{bmatrix}
-2\Lambda_l + c_l W_{l+1}^\tp W_{l+1} & \Lambda_l W_l \\
W_l^\tp \Lambda_l & \Xi_l
\end{bmatrix} \prec 0,\\
& \Lambda_l \succeq 0,\quad c_l > 0,
\end{aligned}
\end{equation}
where we define $W_{N+1}=I_{n_w}$.
For $l = 2,\cdots ,N$, the matrix $\Xi_l$ is updated as
\begin{equation}\label{eq:ECLipsE-recursion}
\Xi_l = -2\Lambda_{l-1}
- \Lambda_{l-1} W_{l-1} \Xi_{l-1}^{-1} W_{l-1}^\tp \Lambda_{l-1}.
\end{equation}
Then $\Delta_{\mathrm{NN}}$ is certified to be $\sqrt L$-Lipschitz for any $L>0$ satisfying
\begin{equation}\label{eq:Lipschitz-cond}
\tfrac{1}{L} I_{n_w} - 2\Lambda_N - \Lambda_N W_N \Xi_N^{-1} W_N^\tp \Lambda_N \prec 0.
\end{equation}
The memory footprint of ECLipsE does not scale with the network depth, since it stores and updates only a constant number of matrices per layer. Its computational complexity, however, grows linearly with the network depth because one small SDP must be solved at each layer. Empirical results indicate that ECLipsE achieves Lipschitz upper bounds that are nearly as tight as those produced by LipSDP \cite{xueclipse}.

\vspace{0.5em}

\noindent\textbf{ECLipsE-Fast~\cite{xueclipse}.} To further improve scalability, ECLipsE-Fast constructs a feasible point of LipSDP LMI \eqref{eq:LipSDP} that shares the recursive structure of ECLipsE while avoiding per-layer SDPs \cite{xueclipse}. The method initializes with $\Xi_1 = -I_{n_v}$.
 For each $l \in [N]$, ECLipsE-Fast computes $\Lambda_l$ in closed form based on $\Xi_{l}$ as
\begin{align}\label{update:ECLipsE-Fast}
\Lambda_l = \frac{1}{\norm{W_l \Xi_{l}^{-1} W_l^\tp}_2}\, I_{n_l}.
\end{align}
The matrix $\Xi_l$ is then updated according to~\eqref{eq:ECLipsE-recursion}.
The above construction yields a $\sqrt{L}$-Lipschitz upper bound for the $\Delta_{\text{NN}}$, where $L$ satisfies~\eqref{eq:Lipschitz-cond}.
In practice, both the memory footprint and runtime of ECLipsE-Fast are nearly insensitive to the network depth. This is because the per-layer computation incurs negligible overhead and only requires inexpensive spectral norm approximations. Such approximations can be efficiently computed using Gram iteration \cite{delattre2023efficient}.
However, this computational efficiency typically comes at the cost of increased conservatism, and ECLipsE-Fast typically produces looser Lipschitz bounds than ECLipsE in practice \cite{xueclipse}.

\vspace{0.5em}

\noindent\textbf{Variants of ECLipsE-Fast~\cite{syed2025improved}.}
Motivated by the efficiency of ECLipsE-Fast, several improved variants have been proposed to enhance the practical tightness of the resulting Lipschitz bounds while preserving scalability. 
These methods retain the recursive structure of ECLipsE-Fast while employing different closed-form update rules for $\Lambda_l$. Representative examples include ECLipsE-SN, which uses spectral-normalization-based scaling, and ECLipsE-GC/GCS, which leverage Gershgorin-type bounds with diagonal scaling.
Another variant, ECLipsE-Shift, introduces additional diagonal shifts to reduce conservatism. All these variants construct feasible points for the LipSDP LMI without solving per-layer SDPs, thereby inheriting the low memory footprint and computational efficiency of ECLipsE-Fast while maintaining scalability with respect to the network depth. Empirically, these variants often produce tighter Lipschitz bounds than ECLipsE-Fast~\cite{syed2025improved}.

\vspace{0.5em}

\noindent\textbf{EP-LipSDP~\cite{wang2024scalability}.}
While ECLipsE-type methods provide highly efficient feasible-point constructions, they do not guarantee optimality with respect to LipSDP. EP-LipSDP addresses this limitation by reformulating LipSDP as an eigenvalue-based optimization problem while preserving the optimal Lipschitz bound. This reformulation enables the use of first-order optimization methods, substantially reducing the memory requirements compared with interior-point SDP solvers. Moreover, optimal solutions of EP-LipSDP can be directly mapped to optimal solutions of LipSDP~\cite{wang2024scalability}.

%% file: Normal_stability_verification_modified.tex
\section{Scalable Stability And Performance Analysis}\label{sec:normal}

In this section, we develop a scalable framework for stability and performance analysis by leveraging the Lipschitz estimation methods introduced in Section~\ref{scalablesection}. The key idea is to decompose the original LMI condition in~\eqref{eq:robust-condition} so that part of the decision variables can be determined using scalable Lipschitz estimation algorithms, thereby reducing the dimension of the remaining optimization problem. Building on this decomposition, we further introduce a multi-step update scheme to reduce conservatism.

\subsection{Matrix Decomposition for Reducing LMI Dimension}

We start with the following useful lemma, which extracts the weight matrices $\{W_l\}_{l=1}^N$ and $\mathsf\Lambda$ from the original large-scale LMI~\eqref{eq:robust-condition}.

\begin{lemma}\label{lemma2}
  The LMI \eqref{eq:robust-condition} holds if and only if there exist a positive definite matrix $P$, diagonal positive semidefinite matrices $\mathsf{\Lambda}$, symmetric matrices $Y_1,Y_2$, matrix $Y_3$, a collection of nonnegative scalars $\alpha$, and a scalar $\gamma>0$ such that the following coupled LMIs hold:
  \begin{align}
& \bmat{\mathbf{A}(P,\gamma,\alpha) - Y_1 &\mathbf{B}(P,\alpha) - Y_3\\
\mathbf{B}(P,\alpha)^\tp - Y_3^\tp &\mathbf{C}(P,\alpha) - Y_2 } \prec 0\label{eq:key2}\\[1.5ex]
&\widetilde{\mathsf{M}}_{\operatorname{NN}}(\mathsf{\Lambda}) + \mathsf{M}(Y_1,Y_2,Y_3)\prec 0,\label{eq:key3}
  \end{align}
  where  
  \[
  \mathsf{M}(Y_1,Y_2,Y_3) = \bmat{Y_1 & 0 & \cdots & Y_3 \\ 0 & 0 & \cdots & 0\\ \vdots & \vdots & \ddots & \vdots \\ Y_3^\tp  & 0 & \cdots & Y_2}.
  \]
\end{lemma}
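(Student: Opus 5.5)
The plan is to prove the two directions separately. The starting observation is that the full-order LMI \eqref{eq:robust-condition} splits additively as
\[
\widetilde{\mathsf{M}}_{\operatorname{NN}}(\mathsf\Lambda)+\mathsf{M}_G(P,\gamma,\alpha)
=\bigl(\widetilde{\mathsf{M}}_{\operatorname{NN}}(\mathsf\Lambda)+\mathsf{M}(Y_1,Y_2,Y_3)\bigr)+\bigl(\mathsf{M}_G(P,\gamma,\alpha)-\mathsf{M}(Y_1,Y_2,Y_3)\bigr),
\]
and this holds for every choice of $Y_1,Y_2,Y_3$. The second bracket is supported only on the first block row/column (dimension $m$) and the last block row/column (dimension $n_w$). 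Its nonzero part is exactly the matrix in \eqref{eq:key2}, padded with zeros in the middle layers.

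\emph{Sufficiency.} Suppose \eqref{eq:key2} and \eqref{eq:key3} hold. Then the padded matrix $\mathsf{M}_G-\mathsf{M}(Y)$ is negative semidefinite, since it is a congruence (selection) of a negative definite matrix. Adding it to the negative definite matrix in \eqref{eq:key3} gives a negative definite sum, which is exactly \eqref{eq:robust-condition}. This direction is immediate.

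\emph{Necessity.} This is the main obstacle, because the padded block has zero middle entries and so is only semidefinite. Given $P,\gamma,\alpha,\mathsf\Lambda$ with $\mathcal{X}:=\widetilde{\mathsf{M}}_{\operatorname{NN}}+\mathsf{M}_G\prec 0$, I would choose the $Y$'s so that both pieces are strictly negative. Let $\epsilon>0$ satisfy $\mathcal{X}\preceq -2\epsilon I$, which exists by strictness. Define $Y_1:=\mathbf{A}(P,\gamma,\alpha)+\epsilon I_m$, $Y_2:=\mathbf{C}(P,\alpha)+\epsilon I_{n_w}$, and $Y_3:=\mathbf{B}(P,\alpha)$.

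With these choices, \eqref{eq:key2} becomes $-\epsilon I\prec 0$, which holds. For \eqref{eq:key3}, we get
\[
\widetilde{\mathsf{M}}_{\operatorname{NN}}+\mathsf{M}(Y)=\mathcal{X}+\epsilon\operatorname{diag}(I_m,0,\dots,0,I_{n_w})\preceq -2\epsilon I+\epsilon I\prec 0.
\]
$Y_1$ and $Y_2$ are symmetric because $\mathbf{A}$ and $\mathbf{C}$ are. The remaining variables $P\succ0$, $\gamma>0$, $\alpha\ge0$ and diagonal $\mathsf\Lambda\succeq0$ are carried over unchanged.

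One structural point needs checking: that $\mathsf{M}_G$ in \eqref{ABC} and $\mathsf{M}(Y_1,Y_2,Y_3)$ use the same block partition. Both have first block of size $m$ (the $\xi$ coordinate), last block of size $n_w=n_N$ (the $h^N$ coordinate), and zeros in the hidden-layer blocks. Given this, the identification above is exact.

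The degenerate case $N=1$ needs a separate note, since then the first and last blocks are the only ones. The same argument applies verbatim, with no middle blocks.
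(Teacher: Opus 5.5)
Your proposal is correct and follows essentially the same route as the paper. The ``if'' direction adds the zero-padded matrix of \eqref{eq:key2} to \eqref{eq:key3}, and the ``only if'' direction uses the same explicit choice $Y_1=\mathbf{A}(P,\gamma,\alpha)+\epsilon I_m$, $Y_2=\mathbf{C}(P,\alpha)+\epsilon I_{n_w}$, $Y_3=\mathbf{B}(P,\alpha)$, with $\epsilon>0$ taken from the strictness margin of \eqref{eq:robust-condition}.
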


Lemma \ref{lemma2} decomposes the original large-scale LMI \eqref{eq:robust-condition} into the coupled LMIs \eqref{eq:key2} and \eqref{eq:key3}. 
The LMI \eqref{eq:key2} depends only on the state-space matrices in \eqref{unsys} and its size is independent of the NN dimensions, whereas the size of LMI \eqref{eq:key3} scales with the width and depth of the NN.
 The two LMIs are coupled through the auxiliary variables $\{Y_1, Y_2, Y_3\}$.
In particular, if there exist $\{P,\alpha,\gamma,\mathsf\Lambda,Y_1,Y_2,Y_3\}$ satisfying \eqref{eq:key2} and \eqref{eq:key3}, then the original LMI condition \eqref{eq:robust-condition} holds with the same choice of $\{P,\alpha,\gamma,\mathsf\Lambda\}$. Consequently, all conclusions of Theorem~\ref{Theorem:largeSDP1} follow.

\begin{remark}[Small-gain method as a special case]
When the Lipschitz bound $\sqrt{L}$ for $\Delta'_{\mathrm{NN}}$ is certified by the corresponding LipSDP condition, the small-gain condition in Proposition 1 can be recovered from Lemma 1 using a particular parameterization of the auxiliary variables $Y_1,Y_2,Y_3$. Specifically, consider the choice $Y_1 = -I_{m}$, 
$Y_2 = \tfrac{1}{L}I_{n_w}$, 
$Y_3 = 0$.
Under this parameterization, \eqref{eq:key2} coincides with the incremental small-gain condition in~\eqref{eq:smallgain}, while \eqref{eq:key3} reduces to the LipSDP condition associated with $\Delta_{\mathrm{NN}}'$. Consequently, the small-gain-based stability condition in Proposition~\ref{proposition2} can be viewed as a special case of the decomposition framework introduced in Lemma~\ref{lemma2}.
\end{remark}

Under the parameterization
$Y_1 = -I_{m}$, $Y_2 = \tfrac{1}{L}I_{n_w}$, and $Y_3 = 0$,
the resulting small-gain condition admits a highly scalable implementation. 
In particular, any scalable Lipschitz estimation algorithm introduced in Section~\ref{scalablesection} directly provides a feasible point for the LipSDP LMI. While this choice leads to an efficient stability verification procedure, it restricts $Y_1$ and $Y_2$ to scaled identity matrices and fixes $Y_3 = 0$, which may limit the achievable tightness of the resulting condition. Allowing $\{Y_1,Y_2,Y_3\}$ to be general decision variables can therefore reduce conservatism. Motivated by this observation, we next develop a unified framework that systematically reduces the dimensionality of the NN-dependent LMI condition \eqref{eq:key3}.

\begin{theorem}\label{coro:main3}
Suppose that $\{\mathsf{\Lambda}_{N-1},Y_1\}$ satisfy
$\widetilde{\mathsf{M}}_{\operatorname{NN}}(\mathsf{\Lambda}_{N-1}) + \operatorname{diag}(Y_1,0_{n_\phi-n_w}) \prec 0$, where $\mathsf{\Lambda}_{N-1} := \{\Lambda_{l}\}_{l=1}^{N-1}$ and
\[
\widetilde{\mathsf{M}}_{\operatorname{NN}}(\mathsf\Lambda_{N-1})
=
\scalebox{0.75}{$
\bmat{
0_{m}
&
\widetilde{W}_1^\tp \Lambda_1
&
0
&
\cdots
&
0
\\
\Lambda_1 \widetilde{W}_1
&
-2\Lambda_1
&
W_2^\tp \Lambda_2
&
\ddots
&
\vdots
\\
0
&
\Lambda_2 W_2
&
-2\Lambda_2
&
\ddots
&
0
\\
\vdots
&
\ddots
&
\ddots
&
\ddots
&
W_{N-1}^\tp \Lambda_{N-1}
\\
0
&
\cdots
&
0
&
\Lambda_{N-1} W_{N-1}
&
-2\Lambda_{N-1}
}.
$}
\]
Define $\Xi_1 = Y_1$ and $\Gamma_1 = I_{m}$, and define
\begin{align}\label{Xi2Gamma2}
    \Xi_2 &= - 2\Lambda_{1}
    -\Lambda_{1}
    \widetilde{W}_{1}\Xi_{1}^{-1}\widetilde{W}_{1}^\tp
    \Lambda_{1}, \nonumber\\
    \Gamma_2 &= -\Lambda_{1}
    \widetilde{W}_{1}\Xi_{1}^{-1}\Gamma_{1}.
\end{align}
For $l = 3,\ldots,N$, recursively define $\Xi_l$ according to \eqref{eq:ECLipsE-recursion} and $\Gamma_l$ according to 
\[
\Gamma_l = -\Lambda_{l-1}
    W_{l-1}\Xi_{l-1}^{-1}\Gamma_{l-1}.
\]
If there exist a positive definite matrix $P$, 
a diagonal positive semidefinite matrix $\Lambda_N$, 
symmetric matrix $Y_2$, matrix $Y_3$, 
a scalar $\gamma > 0$, 
and a collection of nonnegative scalars 
$\boldsymbol{\alpha}$ 
such that the following LMIs hold:
\begin{align}\label{key:lmi5}
\begin{split}
& \bmat{\mathbf{A}(P,\gamma,\alpha) - Y_1 &\mathbf{B}(P,\alpha) - Y_3\\
\mathbf{B}(P,\alpha)^\tp - Y_3^\tp &\mathbf{C}(P,\alpha) - Y_2 } \prec 0\\[1.5ex]
  &\scalebox{0.95}{$
    \bmat{
        \Xi_N 
        & W_N^\tp\Lambda_N + \Gamma_N Y_3 
        & 0\\
        \Lambda_NW_N + Y_3^\tp \Gamma_N^\tp 
        & Y_2 - 2\Lambda_N 
        & Y_3^\tp\\
        0 
        & Y_3 
        & \Big(\sum_{l=1}^{N-1} \Gamma_l^\tp \Xi_l^{-1} \Gamma_l \Big)^{-1}
    }$}\prec 0,
\end{split}
\end{align}
then LMIs \eqref{eq:key2} and \eqref{eq:key3} are satisfied with the same
$\{P,\boldsymbol{\alpha},\gamma,\mathsf{\Lambda},Y_1,Y_2,Y_3\}$, and consequently all conclusions of Theorem~\ref{Theorem:largeSDP1} remain valid.
\end{theorem}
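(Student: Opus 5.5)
Since the first LMI in \eqref{key:lmi5} is literally \eqref{eq:key2}, the only task is to show that the second LMI in \eqref{key:lmi5}, together with the hypothesis on $\{\mathsf\Lambda_{N-1},Y_1\}$, implies \eqref{eq:key3}. Lemma~\ref{lemma2} then gives \eqref{eq:robust-condition}, and Theorem~\ref{Theorem:largeSDP1} supplies the conclusions.

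I would prove \eqref{eq:key3} by block Gaussian elimination of the variables $\xi,h^1,\dots,h^{N-1}$, in order, through repeated Schur complements. Order the blocks of $\widetilde{\mathsf M}_{\mathrm{NN}}(\mathsf\Lambda)+\mathsf M(Y_1,Y_2,Y_3)$ as $(\xi,h^1,\dots,h^{N-1},h^N)$. The matrix is block tridiagonal, plus the corner coupling $Y_3$ between $\xi$ and $h^N$ and the term $Y_2$ added to the last diagonal block.

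\textbf{Step 1: the leading principal block.} The leading principal block on $(\xi,h^1,\dots,h^{N-1})$ equals $\widetilde{\mathsf M}_{\mathrm{NN}}(\mathsf\Lambda_{N-1})+\operatorname{diag}(Y_1,0)$, which is negative definite by hypothesis. Applying the standard recursive LDL$^\tp$ factorization of a block-tridiagonal negative definite matrix shows that the successive pivots are exactly $\Xi_1=Y_1,\Xi_2,\dots,\Xi_{N-1}$ from \eqref{Xi2Gamma2} and \eqref{eq:ECLipsE-recursion}, and that each $\Xi_l\prec 0$. In particular all the inverses appearing in the statement exist.

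\textbf{Step 2: tracking the two coupling channels.} Next I would compute the Schur complement of this leading block in the full matrix. This leaves only the $h^N$ block, but it is cleaner to carry the $\xi$-coupling along as a separate channel. By induction on $l$, after eliminating $\xi,h^1,\dots,h^{l-1}$, the current pivot block is $\Xi_l$ and it couples to $h^N$ through the term $\Gamma_l Y_3$ (with $\Gamma_1=I_m$). Eliminating $h^l$ then produces the recursion $\Gamma_{l+1}=-\Lambda_l W_l\Xi_l^{-1}\Gamma_l$. It also subtracts $Y_3^\tp\Gamma_l^\tp\Xi_l^{-1}\Gamma_l Y_3$ from the $h^N$ block. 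Collecting these, the final Schur complement with respect to the $h^N$ block is
\[
Y_2-2\Lambda_N-(\Lambda_NW_N+Y_3^\tp\Gamma_N^\tp)\Xi_N^{-1}(W_N^\tp\Lambda_N+\Gamma_NY_3)-Y_3^\tp\Big(\sum_{l=1}^{N-1}\Gamma_l^\tp\Xi_l^{-1}\Gamma_l\Big)Y_3 .
\]
Here the step $l=N$ has been kept separate as the explicit $\Xi_N$ coupling term. So \eqref{eq:key3} holds if and only if this expression is $\prec 0$.

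\textbf{Step 3: matching the reduced LMI.} The second LMI in \eqref{key:lmi5} has diagonal blocks $\Xi_N\prec 0$ and $S:=(\sum_{l=1}^{N-1}\Gamma_l^\tp\Xi_l^{-1}\Gamma_l)^{-1}$. The matrix $\sum_l\Gamma_l^\tp\Xi_l^{-1}\Gamma_l$ is negative definite, since it is $\prec 0$ already through the $l=1$ term $Y_1^{-1}$. Hence $S\prec 0$. Taking the Schur complement of the $3\times 3$ block LMI with respect to the first and third blocks (which are decoupled from each other) gives exactly the expression displayed in Step~2, so its negativity is equivalent to the second LMI in \eqref{key:lmi5}.

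\textbf{Main obstacle.} The delicate part is the bookkeeping in Step~2. One must verify that the corner block $Y_3$ propagates through the elimination precisely via the $\Gamma_l$ recursion, and that the correction terms accumulate additively as $\sum_l\Gamma_l^\tp\Xi_l^{-1}\Gamma_l$. I would establish this by an explicit induction on the partially eliminated matrix.
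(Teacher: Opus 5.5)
Your proposal is correct and follows essentially the paper's proof. Both eliminate the leading block $(\xi,h^1,\dots,h^{N-1})$, whose negativity is the hypothesis, by recursive Schur complements; both propagate the corner coupling $Y_3$ through the $\Gamma_l$ recursion, accumulate the corrections as $\sum_{l=1}^{N-1}\Gamma_l^\tp\Xi_l^{-1}\Gamma_l$, and finish with one more Schur complement to remove the bilinearity in $Y_3$. The differences are cosmetic: you also eliminate $h^{N-1}$ and match via a joint Schur complement on blocks one and three, whereas the paper stops at the $2\times2$ form and complements only the third block. There is also a small indexing slip: that leading block has $N$ diagonal blocks, so its pivots are $\Xi_1,\dots,\Xi_N$, not $\Xi_1,\dots,\Xi_{N-1}$, with $\Xi_l$ belonging to $h^{l-1}$ and $h^0=\xi$. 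Your final formula already uses this correct indexing.
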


\begin{table*}[t]
\centering
\caption{Comparison of the dimensions and structural properties of
representative verification conditions.}
\label{tab:complexity_comparison}
\renewcommand{\arraystretch}{1.15}
\begin{tabular}{llll}
\hline
Method
& Main control-analysis LMI dimension
& Depth dependence
& Coupling retained \\
\hline

Full-order SDP (Theorem \ref{Theorem:largeSDP1})
& $m+\sum_l n_l$
& Yes
& Full layerwise structure \\

Chordal-DeepSDP \cite{xue2024chordal}
& Small cliques; number grows with $N$
& Yes
& Full structure via sparse decomposition \\

Small gain (Proposition \ref{proposition2})
& $m+n_w$
& Via Lipschitz estimation only
& Global scalar Lipschitz bound \\

Reduced condition (Theorem \ref{coro:main3})
& $m+n_{N-1}+n_w$
& Via Lipschitz estimation only
& General plant--NN coupling via $Y_1,Y_2,Y_3$ \\

\hline
\end{tabular}
\end{table*}

Theorem~\ref{coro:main3} provides a scalable sufficient reduction of the
NN-dependent LMI condition~\eqref{eq:key3} after fixing an upstream
NN certificate. The verification procedure consists of two main steps. First, we seek a feasible triplet $\{\mathsf\Lambda_{N-1}, Y_1, L\}$ satisfying 
$\widetilde{\mathsf M}_{\mathrm{NN}}(\mathsf\Lambda_{N-1}) + \operatorname{diag}\!\left(Y_1, 0_{n_\phi-n_{N-1}-n_{w}}, \tfrac{1}{L} I_{n_{N-1}}\right) \prec 0$, 
with a small feasible value of $L$. The scalar $\sqrt{L}$ can be interpreted as a Lipschitz upper bound for
the first $N-1$ layers of $\Delta'_{\rm NN}$. For a general $Y_1 \prec 0$,
$\sqrt{L}$ is the Euclidean Lipschitz bound of the transformed network
under the input coordinate $\hat{\xi}=R\xi$, equivalently a weighted-input
Lipschitz bound for the original network; see Remark~\ref{remark4}.

Since $\tfrac1L I_{n_{N-1}} + \Xi_N \prec 0$
implies $\Xi_N \prec -\tfrac1L I_{n_{N-1}}$,
a smaller value of $L$
imposes a more negative upper bound on $\Xi_N$. To gain further intuition, consider the special case $Y_3=0$, under which the second LMI in \eqref{key:lmi5} reduces to 
\[
\begin{bmatrix} \Xi_N & W_N^\tp\Lambda_N   \\  \Lambda_N W_N & Y_2 - 2\Lambda_N \end{bmatrix} \prec 0.
\]
By the Schur complement, the above LMI is equivalent to
\[
\Xi_N - W_N^\tp\Lambda_N \left(Y_2 - 2\Lambda_N\right)^{-1} \Lambda_N W_N \prec 0. 
\]
Since $\left(Y_2 - 2\Lambda_N\right)^{-1} \prec 0$, a more negative $\Xi_N$ makes the Schur complement inequality easier to satisfy. 
This relation indicates that a smaller $L$, which enforces a more
negative upper bound on $\Xi_N$, can potentially reduce the
conservatism of condition~\eqref{key:lmi5}.

Owing to the computational efficiency and low memory footprint of the Lipschitz estimation algorithms introduced in Section~\ref{scalablesection}, the first step can be carried out efficiently even for large-scale NNs. Besides, different algorithms in Section~\ref{scalablesection} induce different tradeoffs between conservatism, computational complexity, and memory requirements.
 Algorithms that produce tighter Lipschitz upper bounds, such as EP-LipSDP, typically yield less conservative stability conditions at the expense of increased computational and memory costs. In contrast, lightweight methods such as ECLipsE-Fast and its variants substantially reduce the computational and memory cost, but usually produce looser Lipschitz bounds and therefore more conservative conditions. This tradeoff will also be illustrated in our numerical experiments.

Second, based on this feasible point, the matrices $\Xi_N$ and $\Gamma_N$ are constructed via the proposed recursion, and the LMIs in \eqref{key:lmi5} are subsequently verified. 
Importantly, checking \eqref{key:lmi5} is computationally efficient. The sizes of these LMIs are independent of the depth of the NN and depend only on the dimensions of the last two layers (i.e., $n_{N-1}$ and $n_w$) and the state-space matrices in \eqref{unsys}.

Table~\ref{tab:complexity_comparison} summarizes the scalability and structural tradeoffs among representative verification approaches in terms of the main control-analysis LMI dimension, depth dependence, and retained plant--NN coupling.

However, the condition in Theorem~\ref{coro:main3} may still be conservative, as $Y_1$ is inherited from the initial Lipschitz certificate and subsequently fixed in the verification of \eqref{key:lmi5}.
In the following subsection, we introduce a multi-step update rule that, when combined with Theorem~\ref{coro:main3}, jointly refines $Y_1$, $Y_2$, and $Y_3$ to further reduce conservatism. 

\subsection{Multi-Step Update Rules for Reducing Conservatism}\label{sec:multistep}

Our approach follows a three-step alternating update procedure. 
In the first step, we initialize $Y_1 \prec 0$ and compute a feasible $\mathsf{\Lambda}_{N-1}$ using a selected scalable Lipschitz constant estimation algorithm. 

\begin{remark}\label{remark4}
To apply the scalable Lipschitz estimation algorithms reviewed in
Section~\ref{scalablesection} for a general $Y_1\prec0$, let $R$ be any nonsingular
matrix satisfying $R^\tp R=-Y_1$.
Introducing the transformed input $\hat{\xi}=R\xi$, the first-layer
weight matrix becomes
$\widehat W_1=\widetilde W_1R^{-1}$.
The standard $-I$-initialized scalable Lipschitz routine can then be
applied to the transformed network. Indeed,
$-\hat{\xi}^\tp\hat{\xi}
    =-\xi^\tp R^\tp R\xi
    =\xi^\tp Y_1\xi$,
so, under the inverse coordinate transformation, the resulting
certificate corresponds to one initialized with the prescribed
$Y_1$. Hence, throughout the following development, a scalable
Lipschitz routine with a general $Y_1\prec0$ refers to applying the
standard routine to the transformed first-layer matrix
$\widehat W_1=\widetilde W_1R^{-1}$. 
\end{remark}

In the second step, with $Y_1$ and $\mathsf{\Lambda}_{N-1}$ fixed, we update $\Lambda_N$, $Y_2$, and $Y_3$. 
In the third step, with $Y_2$, $Y_3$, and $\mathsf{\Lambda}$ fixed, we update $Y_1$.

Suppose that $\gamma > 0$ and $Y_1 \prec 0$ are fixed, and a feasible $\mathsf{\Lambda}_{N-1}$ has been obtained using a scalable Lipschitz constant estimation algorithm.
One convenient way to check the feasibility of the LMIs in \eqref{key:lmi5} is to introduce an auxiliary scalar decision variable
$\nu$ and solve the following optimization problem, which minimizes $\nu$
subject to LMI constraints, with $\{P,\alpha,\nu,\Lambda_N,Y_2,Y_3\}$ as the decision variables:
\begin{align}\label{eq:lmi4}
\begin{split}
& \bmat{\mathbf{A}(P,\gamma,\alpha) - Y_1 &\mathbf{B}(P,\alpha) - Y_3\\
\mathbf{B}(P,\alpha)^\tp - Y_3^\tp &\mathbf{C}(P,\alpha) - Y_2 } \preceq \nu I\\[1.5ex]
  &\scalebox{0.95}{$
    \bmat{
        \Xi_N 
        & W_N^\tp\Lambda_N + \Gamma_N Y_3 
        & 0\\
        \Lambda_NW_N + Y_3^\tp \Gamma_N^\tp 
        & Y_2 - 2\Lambda_N 
        & Y_3^\tp\\
        0 
        & Y_3 
        & \Big(\sum_{l=1}^{N-1} \Gamma_l^\tp \Xi_l^{-1} \Gamma_l \Big)^{-1}
    }$}\prec 0,
\end{split}
\end{align}
If the optimal value $\nu$ satisfies $\nu < 0$, then the LMIs in \eqref{key:lmi5} are feasible. 
Consequently, the resulting choice of $\{P,\alpha,\gamma, \mathsf{\Lambda}, Y_1, Y_2, Y_3\}$ also provides a feasible solution to the original conditions \eqref{eq:key2} and \eqref{eq:key3} in Lemma \ref{lemma2}.

If the optimal value $\nu$ satisfies $\nu \ge 0$, then the corresponding
solution $\{P,\alpha,\gamma,\mathsf{\Lambda},Y_1,Y_2,Y_3\}$ does not yield a feasible solution
to the original LMIs~\eqref{eq:key2} and~\eqref{eq:key3}.
In this case, one can fix $\mathsf{\Lambda}$, $Y_2$, and $Y_3$, and re-optimize
over $Y_1$, $P$, and the auxiliary variable $\nu$.
This re-optimization step amounts to solving a reduced SDP and guarantees
that the resulting value of $\nu$ is less than or equal to the original one.

When $\{\mathsf{\Lambda},Y_2,Y_3\}$ are fixed, the size of the LMI condition
$\widetilde{\mathsf{M}}_{\operatorname{NN}}(\mathsf{\Lambda}) + \mathsf{M}(Y_1,Y_2,Y_3)\prec 0$
can be reduced by recursively applying the Schur complement lemma $N$ times,
leading to an equivalent constraint of the form $Y_1 \prec \Theta$,
where $\Theta$ depends only on $\{\mathsf{\Lambda},Y_2,Y_3\}$ and
$\{W_l\}_{l=1}^N$.
Specifically, we initialize $\Xi_1' = - 2\Lambda_N + Y_2$ and $\Gamma_1' = I_{n_w}$.
The matrices $\Xi_l'$ and $\Gamma_l'$ for $l = 2,\ldots,N$ are recursively computed as
\begin{align}\label{reverse3N}
\begin{split}
\Xi_l' &= - 2\Lambda_{N-l+1}
- W_{N-l+2}^\tp \Lambda_{N-l+2} {(\Xi_{l-1}')}^{-1}
\Lambda_{N-l+2} W_{N-l+2},\\
\Gamma_l' &= - \Gamma_{l-1}'{(\Xi_{l-1}')}^{-1}
\Lambda_{N-l+2}W_{N-l+2}.
\end{split}
\end{align}
Then, $\Theta$ can be rewritten as:
\begin{align}
\begin{split}
\nonumber
\Theta = & \sum_{l=1}^{N-1} \big(Y_3\Gamma_l' {(\Xi_l')}^{-1} (Y_3\Gamma_l')^\tp \big)\\ & + \big(\widetilde{W}_1^\tp\Lambda_1 + Y_3\Gamma_N'\big) {(\Xi_N')}^{-1} \big(\widetilde{W}_1^\tp\Lambda_1 + Y_3\Gamma_N'\big)^\tp. 
\end{split}
\end{align}
Therefore, we can fix $\{\mathsf{\Lambda},Y_2, Y_3\}$ and minimize $\nu$ over the following LMI condition with $\{P, \alpha, \nu, Y_1\}$ being the 
decision variables:
\begin{align}\label{eq:lmi6}
    \begin{split}    
\bmat{\mathbf{A}(P,\gamma,\alpha) - Y_1 &\mathbf{B}(P,\alpha) - Y_3\\
\mathbf{B}(P,\alpha)^\tp - Y_3^\tp &\mathbf{C}(P,\alpha) - Y_2 } \preceq & \nu I\\[0.5em]
Y_1\prec & \Theta.
    \end{split}
\end{align}

\begin{algorithm}[t] 
\caption{Multi-round ABCD Algorithm} 
\label{al:ABCD} 
\begin{algorithmic}[1] 
\REQUIRE state space matrices in \eqref{unsys}, $\gamma > 0$, 
$\{W_l\}_{l=1}^N$, maximum number of rounds $K_{\max}$

\STATE \textbf{Initialize} $Y_1 \prec 0$, $\nu \leftarrow 0$, $k \leftarrow 0$

\WHILE{$\nu \ge 0$}
    \STATE $k \leftarrow k+1$
    
    \STATE \textbf{$\{\mathsf\Lambda_{N-1}\}$-Initialization:} 
    Compute feasible $\mathsf{\Lambda}_{N-1}$ using a scalable 
    Lipschitz constant estimation algorithm.
    
    \STATE \textbf{$\{Y_2,Y_3\}$-Update:} 
    Fix $Y_1$ and $\mathsf{\Lambda}_{N-1}$. Minimize $\nu$ 
    subject to the LMI \eqref{eq:lmi4}, with 
    $\{P,\alpha,\nu,\Lambda_N,Y_2,Y_3\}$ being the decision variables.
    
    \IF{$\nu < 0$}
        \STATE \textbf{Return:} \textsc{Certified} and the corresponding 
        matrices $\{P,\alpha,\gamma,\mathsf{\Lambda},Y_1,Y_2,Y_3\}$
    \ENDIF
    
    \STATE \textbf{$\{Y_1\}$-Update:} 
    Fix $Y_2$, $Y_3$, and $\mathsf{\Lambda}$. Minimize $\nu$ 
    subject to the LMI \eqref{eq:lmi6}, with 
    $\{P,\alpha,\nu,Y_1\}$ being the decision variables.
    
    \IF{$\nu < 0$}
        \STATE \textbf{Return:} \textsc{Certified} and the corresponding 
        matrices $\{P,\alpha,\gamma,\mathsf{\Lambda},Y_1,Y_2,Y_3\}$
    \ENDIF
    
    \IF{$k \ge K_{\max}$}
        \STATE \textbf{break}
    \ENDIF
\ENDWHILE
\STATE \textbf{Return:} \textsc{Unknown}
\end{algorithmic} 
\end{algorithm}

The above multi-step update scheme can be viewed as one iteration of an
approximate block coordinate descent (ABCD) method, in which different blocks
of decision variables are updated successively.

This observation naturally motivates a multi-round alternating algorithm.
Initially, choose $Y_1\prec0$ and set $\nu=0$.
In each round, feasible $\mathsf{\Lambda}_{N-1}$ is first computed from the
network weight matrices and the current $Y_1$ using a scalable Lipschitz
constant estimation algorithm.
Then, with $\mathsf{\Lambda}_{N-1}$ fixed, the variables
$\{\Lambda_N,Y_2,Y_3\}$ are updated by solving~\eqref{eq:lmi4}, and $Y_1$ is
subsequently updated by solving~\eqref{eq:lmi6}.
The updated $Y_1$ obtained at the end of the current round is used to
initialize the next round.

To ensure finite termination, Algorithm~\ref{al:ABCD} is executed for at most
$K_{\max}$ rounds.
If either update step yields $\nu<0$, the algorithm terminates with status
\textsc{Certified}, and the corresponding matrices
$\{P,\alpha,\gamma,\mathsf{\Lambda},Y_1,Y_2,Y_3\}$ constitute a feasible
solution of the LMI conditions~\eqref{eq:key2} and~\eqref{eq:key3} in
Lemma~\ref{lemma2}.
If no such certificate is obtained within $K_{\max}$ rounds, the algorithm
terminates with status \textsc{Unknown}.
Importantly, an \textsc{Unknown} status does not imply infeasibility of the
original LMI conditions~\eqref{eq:key2} and \eqref{eq:key3}, since the alternating procedure
provides a sufficient search strategy rather than a complete feasibility test.
Algorithm~\ref{al:ABCD} summarizes the proposed multi-round ABCD procedure.

Moreover, for iterations beyond the first round, the $Y_1$-update step
continues to guarantee a non-increasing value of $\nu$, whereas the
$\{Y_2,Y_3\}$-update step does not admit such a monotonicity guarantee.
The reason is that the calculation of $\mathsf{\Lambda}_{N-1}$ in the
$\{Y_2,Y_3\}$-update step is performed using a scalable Lipschitz estimation
algorithm, which differs from directly minimizing $\nu$ while treating
$\mathsf{\Lambda}$ as a decision variable in the SDP.
Nevertheless, in the numerical experiments of Section~\ref{sec:num}, only a
few rounds of ABCD are typically required, and in some cases a single round
suffices.

%% file: Further_properties.tex
\section{Implications and Limitations of Theorem \ref{Theorem:largeSDP1}}\label{sec:sufficient_condition}

In this section, we further investigate the implications and limitations 
of the LMI condition in Theorem~\ref{Theorem:largeSDP1}. 
In particular, we show that, under additional assumptions,
feasibility of the LMI guarantees boundedness of individual
closed-loop state trajectories, while it does not in general imply
global asymptotic convergence to a fixed point.

\begin{theorem}[Boundedness of the state trajectory]\label{Theorem:bounded}
Under the assumptions of Theorem~\ref{Theorem:largeSDP1}, further suppose that
$\rho\in(0,1)$ and the external input satisfies $d=0$.
If the LMI \eqref{eq:robust-condition} is feasible, then, for any initial
condition $x_0\in\mathbb{R}^{n_x}$, the corresponding state trajectory satisfies $\sup_{k\in\mathbb{Z}_{\geq 0}}\|x_k\|_2 < \infty$.
\end{theorem}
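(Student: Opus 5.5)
We plan to compare the given trajectory with its own time shift. Let $x_k$ be the trajectory from $x_0$ with $d=0$. Set $x_k' := x_{k+1}$, which is the trajectory started from $x_0' = x_1$. There is a subtlety for $\Delta_{\mathrm U}$: it is causal and time-varying in general. We would instead compare with the trajectory generated from initial condition $x_1$ under the same operator $\Delta_{\mathrm U}$ with zero filter state. Call it $\{\tilde x_k\}$. We would not rely on shift invariance.

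The key estimate comes from the dissipation inequality behind Theorem~\ref{Theorem:largeSDP1}, carried out with the exponential weight $\rho^{-k}$ (Appendix~\ref{whyLMI}). We take the incremental storage $V(\delta\zeta)=\delta\zeta^\tp P\delta\zeta$. Strict feasibility of \eqref{eq:robust-condition} gives a margin $\epsilon>0$. The intended bound is
\[
\delta\zeta_{k+1}^\tp P\delta\zeta_{k+1}-\rho\,\delta\zeta_k^\tp P\delta\zeta_k + \ldots \le 0 .
\]
If needed, we would rescale using Remark~\ref{remarkrho}. Multiplying by $\rho^{-k}$, summing, and applying the $\mathsf\Lambda$-QC \eqref{eq:Lambda-IQC} together with the hard IQC \eqref{eq:multi-hard-iqc} should give, for any two trajectories with zero filter states initially,
\[
\|x_k-x_k'\|_2 \le c\,\rho^{k/2}\|x_0-x_0'\|_2,
\]
with $c=\sqrt{\lambda_{\max}(P)/\lambda_{\min}(P)}$. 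This is incremental exponential stability at rate $\sqrt\rho<1$. Deriving it is the first step. It requires that the hard IQC hold pointwise in $K$ with the $\rho^{-k}$ weighting, and that the storage inequality be reorganized so that $\rho^{-K}V_K\le V_0$.

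The main obstacle is turning exponential incremental stability into boundedness. Pure shifting fails because the filter $\psi$ and the operator $\Delta_{\mathrm U}$ carry memory. Our plan is to compare $x$ with the trajectory $x^{(j)}$ restarted at time $j$ from $x_j$. We would show that $x^{(j)}_i$ relates to $x_{j+i}$ through the same incremental bound. This rests on time invariance of $G$ and the network, plus the assumption that the IQC holds for arbitrary pairs in $\ell_{2e}$.

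If that comparison is unavailable, we would fall back on a contraction argument applied directly to $\delta_k := x_{k+1}-x_k$. Applying the incremental estimate to the pair $(x,\sigma x)$ gives $\|x_{k+1}-x_k\|_2\le c\rho^{k/2}\|x_1-x_0\|_2$. This uses well-posedness and causality from Assumption~\ref{assumption1}, and it requires treating the pair as an admissible input pair of $\Delta_{\mathrm U}$ with the filter state initialized appropriately. Handling that initialization is the delicate point: we would bound the nonzero initial filter mismatch by adding a constant $\delta\psi_0^\tp P\delta\psi_0$ to the storage at time $0$.

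Granting this, the geometric series gives
\[
\|x_k\|_2 \le \|x_0\|_2+\sum_{j=0}^{k-1}\|x_{j+1}-x_j\|_2 \le \|x_0\|_2+\frac{c\,\|x_1-x_0\|_2}{1-\sqrt\rho}<\infty,
\]
uniformly in $k$, which proves the claim. Note that the hypothesis $\rho<1$ is used exactly to make this series summable. With $\rho=1$ one only gets $\ell_2$-summability of the increments, not boundedness.
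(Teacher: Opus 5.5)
\textbf{Step 1 is sound once corrected.} The LMI \eqref{eq:robust-condition} contains no $\rho$. It yields $V(\delta\zeta_{k+1})-(1-\epsilon)V(\delta\zeta_k)+\delta r_k^\tp M(\alpha)\delta r_k\le0$. With $\lambda:=\max\{\rho,1-\epsilon\}<1$ and Remark~\ref{remarkrho}, this gives incremental exponential stability at rate $\sqrt\lambda$ for two genuine closed-loop trajectories. The filter initialization you flag is not an obstacle: $\psi$ is an analysis device that does not feed back into $x$, so it can be started at zero on any trajectory.

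\textbf{The gap is the conversion to boundedness through shifted or restarted copies of the trajectory.} The incremental $\rho$-hard IQC applies only to two input--output pairs of $\Delta_{\mathrm U}$. Assumption~\ref{assumption1} gives only causality and $\Delta_{\mathrm U}(0)=0$, and $\Delta_{\mathrm U}$ may be time-varying and have memory. Then $\{(p_{k+1},q_{k+1})\}_{k\ge0}$ is in general not an input--output pair of $\Delta_{\mathrm U}$; already a unit delay violates this at $k=0$. Likewise, the copy restarted from $x_j$ is not tied to $\{x_{j+i}\}_{i\ge0}$ by any IQC. Adding $\delta\psi_0^\tp P\delta\psi_0$ to the storage does not supply the missing inequality.

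The route in fact proves too much. Your bound $\|x_{k+1}-x_k\|_2\le c\lambda^{k/2}\|x_1-x_0\|_2$ would make $\{x_k\}$ Cauchy, hence convergent. But the instance in the proof of Theorem~\ref{them:counterexample} meets every hypothesis here for any $\rho\in(0,1)$: its switching gain satisfies a pointwise incremental QC, and the LMI is feasible. Yet its trajectories do not converge. Your argument survives only when $\Delta_{\mathrm U}$ is memoryless and time-invariant, so that the shifted pair is again an input--output pair.

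\textbf{The missing idea is to compare the trajectory with the zero signal and treat the biases as a constant forcing term.}
\begin{itemize}
\item Since $\Delta_{\mathrm U}(0)=0$, the pair $(p,q)$ versus $(0,0)$ is admissible, and its filtered increment is just $r_k$.
\item Since $\phi(0)=0$ and $\phi$ is slope-restricted in $[0,1]$, each layer obeys the sector inequality $2h^{l\tp}\Lambda_l(W_lh^{l-1}+b_l-h^l)\ge0$, with $\widetilde W_1$ and $\xi$ for $l=1$. Here $b_l$ enters linearly.
\item The paper drops the performance term $E^\tp E$. It uses strict feasibility to add $\sigma c\,P$ on the state block and $\tfrac{\epsilon}{\sigma}\Lambda_l^2$ on the hidden blocks, where $c=\sum_l\|b_l\|_2^2$. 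It then rescales by homogeneity and takes a Schur complement, so the quadratic form in $[\zeta_k;q_k;d_k;\mathbf h_k;\mathbf b]$ is negative.
\item This gives $\tilde V(\zeta_{k+1})-(1-\sigma c)\tilde V(\zeta_k)+r_k^\tp M(\tilde\alpha)r_k<0$ with $\tilde V:=V-1$.
\item Choosing $\sigma\le(1-\rho)/c$ ensures $1-\sigma c\ge\rho$. Remark~\ref{remarkrho} then makes the weighted IQC sum nonnegative, so $V(\zeta_k)<(1-\sigma c)^k\big(V(\zeta_0)-1\big)+1$.
\end{itemize}
This is where $\rho<1$ is genuinely used. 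The contraction factor $1-\sigma c$ must be strictly below one to absorb the constant bias forcing, yet at least $\rho$ for the hard IQC to apply. If $c=0$, the zero trajectory is itself admissible and incremental convergence suffices.
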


The following theorem shows that the conditions of Theorem~\ref{Theorem:largeSDP1} are insufficient to guarantee asymptotic convergence of the state trajectory.

\begin{theorem}[Counterexample]\label{them:counterexample}
Under the assumptions of Theorem~\ref{Theorem:largeSDP1}, further suppose
that $\rho\in(0,1]$ and $d=0$.
For an uncertain perturbation $\Delta_{\operatorname{U}}$ satisfying Assumption~\ref{assumption1},
feasibility of the LMI~\eqref{eq:robust-condition} does not, in general,
imply pointwise asymptotic convergence of the closed-loop state trajectory.
In particular, there exists an admissible time-varying interconnection
satisfying all the above assumptions for which
\eqref{eq:robust-condition} is feasible, yet the corresponding closed-loop
state trajectories do not converge to any fixed point.
\end{theorem}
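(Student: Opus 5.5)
We prove the statement by an explicit construction: a scalar interconnection in which the uncertain block $\Delta_{\operatorname{U}}$ is a time-varying but incrementally trivial operator that injects a non-decaying yet bounded forcing. The key observation is that the incremental IQC only constrains differences $q-q'$. An operator of the form $\Delta_{\operatorname{U}}(p)_k = \sigma(p)_k + c_k$, with $c_k$ a fixed exogenous-like sequence, satisfies every incremental IQC that $\sigma$ satisfies, because $c$ cancels in $\delta q$. Assumption~\ref{assumption1} requires $\Delta_{\operatorname{U}}(0)=0$, so we cannot take a pure offset. Instead we take a causal operator that is linear in $p$ with time-varying gain, $q_k = \theta_k p_k$ with $|\theta_k|\le \beta$. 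This is causal, satisfies $\Delta_{\operatorname{U}}(0)=0$, and satisfies the incremental (static, hence $\rho$-hard with $\Psi=I$) sector IQC
\[
\beta^2\|\delta p_k\|_2^2 - \|\delta q_k\|_2^2 \ge 0 ,
\]
so that $M=\operatorname{diag}(\beta^2,-1)$. The nonconvergence then has to come from the NN bias, which makes $p_k$ nonzero along trajectories even though $\Delta_{\operatorname{U}}(0)=0$.

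We fix the simplest data. Take $n_x=1$ and $A_G=a$ with $|a|<1$. Use a one-layer network with $W_1=0$ and bias $b_1>0$ and ReLU activation, so that $w_k=\phi(b_1)=b_1$ is constant. Choose $B_{G1}=1$, $B_{G2}=1$, $C_{G2}=1$, and set all other matrices to zero. Then $x_{k+1}=a x_k + b_1 + \theta_k x_k$. Choose $\theta_k$ to alternate between $0$ and $\beta$ in long blocks, or more simply to alternate in sign so that the affine map $x\mapsto (a+\theta_k)x+b_1$ has two different fixed points $b_1/(1-a)$ and $b_1/(1-a-\beta)$. By periodically switching, the trajectory oscillates and does not converge. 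For instance, with period-2 switching $\theta_k\in\{0,\beta\}$, the trajectory converges to a period-2 orbit whose two points differ whenever $\beta b_1\neq 0$; this is a direct computation of the fixed point of the two-step map.

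Next we verify feasibility of \eqref{eq:robust-condition}. With $W_1=0$, $\widetilde{\mathsf M}_{\rm NN}$ reduces to the block $-2\Lambda_1$ on the $w$ coordinate, and $\mathbf B$ enters only through $B_1$. Since $d=0$, we may take trivial $B_3,D_{13}$ and absorb $\gamma$. The condition then becomes a small $3\times3$ LMI in $(\zeta,q,w)$. It holds for large $\Lambda_1$ (which kills the $w$ block by a Schur complement) provided the $(\zeta,q)$ block
\[
\bmat{a^2P-P+\alpha\beta^2 & aP\\ aP & P-\alpha}\prec 0
\]
is feasible. This is exactly the incremental small-gain/KYP condition $|a|+\beta<1$, which we impose, e.g.\ $a=0$, $\beta=1/2$, $P=1$, $\alpha\in(1,4)$ after a strict-margin adjustment. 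Because $C_2=0$, the performance terms vanish. This is routine Schur-complement checking.

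The main obstacle is ensuring that the chosen $\Delta_{\operatorname{U}}$ is genuinely admissible: it must be causal and well posed, satisfy $\Delta_{\operatorname{U}}(0)=0$, and satisfy the incremental $\rho$-hard IQC for the chosen $\rho\le1$. The time-varying gain choice handles all of these, and static sector bounds hold pointwise, hence for every $\rho\in(0,1]$ by Remark~\ref{remarkrho}. Incremental convergence still holds, consistent with Theorem~\ref{Theorem:largeSDP1}, because differences obey $\delta x_{k+1}=(a+\theta_k)\delta x_k$ with $|a+\theta_k|<1$. The individual trajectory, however, tracks the switching fixed points, and we would finish by exhibiting its two distinct limit points explicitly.
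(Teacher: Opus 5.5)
Your construction is correct and is essentially the paper's. It has the same ingredients: a scalar plant, a one-layer network with $W_1=0$ so that $w_k$ is the constant bias, and a periodically switching gain $q_k=\theta_k p_k$ certified by a static sector IQC, with nonconvergence coming from the distinct fixed points of the two alternating affine maps while incremental convergence still holds. The differences are cosmetic: the paper uses $a=0.5$, a $[0.1,0.2]$ sector multiplier and a numerically found feasible point, whereas your choice $a=0$, $\beta=1/2$ lets you verify feasibility analytically (large $\Lambda_1$ plus a Schur complement). Drop the phrase ``alternate in sign,'' since the fixed points you list correspond to $\theta_k\in\{0,\beta\}$ (though a sign-alternating choice would also work).
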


The following remark clarifies an additional condition under which global convergence can be recovered.

\begin{remark}
Suppose that the external input $d=0$ and there exists a trajectory $(\zeta,v,w,p,q,d,e)$ of the interconnection starting from an initial condition $x_0 \in \mathbb{R}^{n_x}$ such that $\lim_{k\to\infty} x_k = x^\star$
for some $x^\star \in \mathbb{R}^{n_x}$.
 If the conditions in Theorem~\ref{Theorem:largeSDP1} hold, then the incremental convergence result in Theorem~\ref{Theorem:largeSDP1} implies that every trajectory starting from any initial condition in $\mathbb{R}^{n_x}$ also converges to the same limit point $x^\star$.  
Therefore, under an additional assumption ensuring the existence of at least one convergent trajectory, the incremental convergence property established in Theorem~\ref{Theorem:largeSDP1} can be strengthened to global convergence to $x^\star$. 

Similar assumptions have appeared in the literature. For example,
\cite{yin2021stability} assumes that the closed-loop equilibrium is
located at the origin, which directly provides a convergent reference
trajectory.
\end{remark}

%% file: Numerical_modified.tex
\section{Numerical Experiments}
\label{sec:num}
\subsection{Synthetic Examples}

This section presents two synthetic instances to evaluate the
scalability and effectiveness of the proposed framework. In both
instances, the objective is to compute certified upper bounds on
the incremental $\ell_2$ gain.

\subsubsection{Setup}

\noindent\textbf{Instance 1 (Scalar Plant).}
The LTI plant $G$ is given by
\begin{align}
\begin{split}
\nonumber
x_{k+1}&= -0.5\,x_{k}+ \underbrace{\bmat{1&\dots&1}}_{n_w}\, w_k +
0.5\,q_k + 0.4\,d_k,\\
v_k &= x_k\\
p_{k}&= 2.5\,x_{k} + 0.6\,d_k,\\
e_{k}&= 2\,x_{k} + 0.9\,d_k.
\end{split}
\end{align}
The uncertain perturbation $\Delta_\text{U}$ is described by two incremental $\rho$-hard IQCs
$\{\Psi_i,M_i\}_{i=1}^2$ with $\rho = 1$.
The first IQC $\{\Psi_1,M_1\}$ is associated with   $M_1=\mathrm{diag}(1,-1)$,
where $\Psi_1$ is given by 
\begin{align}
\begin{split}
\nonumber
\psi_{k+1}&= -0.3\,\psi_k + 1.3\,p_k,\\
r_k^{(1)}&=
\begin{bmatrix}0\\ -0.1\end{bmatrix}\psi_k+
\begin{bmatrix}0.2 & 0\\ 0 & -0.1\end{bmatrix}
\begin{bmatrix}p_k\\ q_k\end{bmatrix}.
\end{split}
\end{align}
The second IQC $\{\Psi_2,M_2\}$ is associated with $M_2=\mathrm{diag}(1,-1)$,
where $\Psi_2$ is given by
\[
r_k^{(2)} =
\begin{bmatrix}
-0.5 & 0.3\\
0 & 1.7
\end{bmatrix}
\begin{bmatrix}p_k\\ q_k\end{bmatrix},
\]
which is static, i.e., it has no internal dynamics.

\noindent\textbf{Instance 2 (Five-dimensional Plant).}
To further demonstrate the proposed framework on higher-dimensional
systems, we consider the following LTI plant $G$
\begin{align}
\begin{split}
\nonumber
x_{k+1} &= A_G x_k + B_{G1} w_k + B_{G2}q_k + B_{G3} d_k \\ 
v_k &= x_k\\
p_k &= C_{G2} x_k + D_{G23}d_k \\
e_k &= C_{G3} x_k + D_{G33}d_k,
\end{split}
\end{align}
with system matrices given by
\begin{align}
\begin{split}
\nonumber
 &\scalebox{0.83}{$
A_G = \begin{bmatrix}
0.52 & 0.12 & -0.08 & 0.05 & 0.03 \\
0.12 & 0.47 & 0.11 & -0.06 & 0.04 \\
-0.08 & 0.11 & 0.38 & 0.09 & -0.07 \\
0.05 & -0.06 & 0.09 & 0.31 & 0.10 \\
0.03 & 0.04 & -0.07 & 0.10 & 0.26
\end{bmatrix},\ B_{G1} = \underbrace{\bmat{1 & \dots & 1\\1 & \dots & 1\\1 & \dots & 1\\1 & \dots & 1\\1 & \dots & 1}}_{n_w}$},\\
 &\scalebox{0.83}{$
 B_{G2} = \bmat{ 0.5\\
      0.2\\
       0.4\\
       0.6\\
      0.3},\ B_{G3} = \bmat{0.8\\
       0.3\\
      0.2\\
       0.6\\
       0.5},\ C_{G2} = \bmat{0.4& 0.4& 0.8& 0.3& 0.7},
 $}\\
  &\scalebox{0.83}{$
  D_{G23} = 0.5,\ C_{G3} = \bmat{0.3 & 0.9 & 0.6 & 0.4 & 0.7},\ D_{G33} = 0.5.
  $}
\end{split}
\end{align}
The uncertain perturbation $\Delta_\text{U}$ is described by the same two incremental 
$\rho$-hard IQCs $\{\Psi_i,M_i\}_{i=1}^2$ as in Instance~1.

We evaluate the ABCD framework combined with several scalable Lipschitz constant estimation algorithms.  
Specifically, we consider ABCD coupled with the following six methods: ECLipsE, ECLipsE-SN, ECLipsE-GC, ECLipsE-GCS, ECLipsE-Shift, and EP-LipSDP. We do not include ECLipsE-Fast, since it is a special case of ECLipsE-SN \cite{syed2025improved}.  
For ECLipsE-SN, ECLipsE-GC, ECLipsE-GCS, and ECLipsE-Shift, we follow the setup in~\cite{syed2025improved}.   
For EP-LipSDP, we solve it using the LipDiff algorithm proposed in \cite{wang2024scalability}.

We consider three baselines. 
The first is the full-order SDP in Theorem~\ref{Theorem:largeSDP1}, which serves as
the least conservative baseline  but does not scale to deep networks.
The second is the small-gain condition in Proposition \ref{proposition2} combined with scalable Lipschitz estimation methods, which scales well but is more conservative. 
The third is the Chordal DeepSDP method~\cite{xue2024chordal}, which exploits chordal sparsity to decompose the LMI and improves computational efficiency, while its memory and computational costs still grow with network depth.

To assess scalability, we vary the depth of the NN component. Tables~\ref{tb:instance1} and~\ref{tb:instance2} report the incremental $\ell_2$-gain upper bounds for Instances~1 and~2, respectively, using networks with $\{20,40,60,80,100,5000\}$ layers and $50$ neurons per layer ($n_w=50$). Fig.~\ref{fig:computation_time} reports the corresponding computational time for both instances using networks with $20$ neurons per layer ($n_w=20$) and $\{20,40,60,80,100,120\}$ layers.

For each method, we progressively decrease $\sqrt{\gamma}$ and repeatedly solve
the corresponding verification problem to identify the smallest certified
value of $\sqrt{\gamma}$ found. The search resolution for $\sqrt{\gamma}$ is set to
$0.01$. For the proposed ABCD framework, we set the maximum number of rounds
to $K_{\max}=10$, and a trial value of $\sqrt{\gamma}$ for which
Algorithm~\ref{al:ABCD} returns \textsc{Unknown} is not interpreted as
infeasible. The incremental $\ell_2$-gain upper bounds
reported in Tables~\ref{tb:instance1} and \ref{tb:instance2} are the resulting certified values of
$\sqrt{\gamma}$. The reported computation time measures the runtime of a single verification
run at the final reported value of $\sqrt{\gamma}$. The time spent searching
over different candidate values of $\sqrt{\gamma}$ is not included.

All methods are implemented in MATLAB using CVX with MOSEK as the underlying solver. All experiments are conducted on a standard laptop equipped with $16$~GB of RAM. In the tables, ``$\sim$'' indicates solver failure due to insufficient memory, and ``$>1$h'' indicates that the computation did not finish within one hour. Entries highlighted in red denote the tightest upper bound among all methods, whereas entries highlighted in blue denote the best result within the ABCD framework. 
\vspace{0.5em}
\subsubsection{Results}

From Tables~\ref{tb:instance1}--\ref{tb:instance2}, when the network is small, the full-order SDP in Theorem~\ref{Theorem:largeSDP1} and Chordal-DeepSDP achieve the tightest upper bounds because they fully exploit the coupled SDP structure. However, the proposed ABCD framework combined with EP-LipSDP achieves upper bounds comparable to those achieved by these baselines.

As the network size increases, Theorem~1 and Chordal-DeepSDP
can no longer be solved due to memory limitations, while
ABCD + EP-LipSDP continues to produce the tightest upper bounds
among all tractable methods.
For extremely deep networks (e.g., 5000 layers), EP-LipSDP can no longer be solved due to memory limitations and ECLipsE requires more than one hour to complete. 
In this regime, ABCD combined with ECLipsE-GC or ECLipsE-GCS provides the tightest upper bounds among all tractable approaches.

Fig.~\ref{fig:computation_time} reports the computational time of the proposed methods and the baselines. To avoid cluttering the figure, we include only ECLipsE-GCS as a representative of the ECLipsE-Fast variants, since the other ECLipsE-Fast variants exhibit similar computational trends. As shown in Fig.~\ref{fig:computation_time}, ABCD-based methods have runtimes comparable
to those of small-gain approaches while providing significantly
tighter upper bounds on the incremental $\ell_2$ gain.

Overall, these results demonstrate that the proposed ABCD framework is less conservative than the incremental small-gain method while enabling a tunable tradeoff between computational and memory efficiency and conservatism through the choice of scalable Lipschitz constant estimation algorithms.

\begin{table}[htp]
\centering
\caption{Upper bounds on the incremental $\ell_2$ gain versus network depth (Instance 1)}
\label{tb:instance1}
\resizebox{\linewidth}{!}{
\begin{tabular}{c|cccccc}
\toprule
\textbf{Method} & 20 & 40 & 60 & 80 & 100 & 5000 \\
\midrule
\multicolumn{7}{c}{\textbf{Baselines}}\\
\midrule
Theorem~\ref{Theorem:largeSDP1} & \textcolor{red}{6.48} & \textcolor{red}{6.47} & \textcolor{red}{6.41} & $\sim$ & $\sim$ & $\sim$ \\
Chordal-DeepSDP & \textcolor{red}{6.48} & \textcolor{red}{6.47} & \textcolor{red}{6.41} & $\sim$ & $\sim$ & $\sim$ \\
Small-Gain + ECLipsE & 7.86 & 7.89 & 7.39 & 7.56 & 7.67 & $>$1h \\
Small-Gain + ECLipsE-SN & 10.75 & 10.23 & 9.16 & 10.40 & 14.74 & 10.61 \\
Small-Gain + ECLipsE-GC & 9.28 & 8.80 & 8.11 & 8.46 & 9.47 & 7.61 \\
Small-Gain + ECLipsE-GCS & 9.38 & 8.91 & 8.19 & 8.61 & 9.86 & 7.61 \\
Small-Gain + ECLipsE-Shift & 10.44 & 10.01 & 9.05 & 10.23 & 14.42 & 9.53 \\
Small-Gain + EP-LipSDP & 7.40 & 7.44 & 7.14 & 7.28 & 7.35 & $\sim$ \\

\midrule
\multicolumn{7}{c}{\textbf{ABCD Framework}}\\
\midrule
ABCD + ECLipsE & 6.88 & 6.90 & 6.55 & 6.67 & 6.74 & $>$1h \\
ABCD + ECLipsE-SN & 8.60 & 8.33 & 7.71 & 8.41 & 10.32 & 8.52 \\
ABCD + ECLipsE-GC & 7.75 & 7.48 & 7.03 & 7.25 & 7.88 & \textcolor{red}{6.92} \\
ABCD + ECLipsE-GCS & 7.83 & 7.57 & 7.11 & 7.37 & 8.12 & \textcolor{red}{6.92} \\
ABCD + ECLipsE-Shift & 8.48 & 8.26 & 7.67 & 8.36 & 10.28 & 7.98 \\
ABCD + EP-LipSDP & \textcolor{blue}{6.51} & \textcolor{blue}{6.52} & \textcolor{blue}{6.46} & \textcolor{red}{6.49} & \textcolor{red}{6.49} & $\sim$ \\

\bottomrule
\end{tabular}}
\end{table}

\begin{table}[htp]
\centering
\caption{Upper bounds on the incremental $\ell_2$ gain versus network depth (Instance 2)}
\label{tb:instance2}
\resizebox{\linewidth}{!}{
\begin{tabular}{c|cccccc}
\toprule
\textbf{Method} & 20 & 40 & 60 & 80 & 100 & 5000 \\
\midrule
\multicolumn{7}{c}{\textbf{Baselines}}\\
\midrule
Theorem~\ref{Theorem:largeSDP1} & \textcolor{red}{7.76} & \textcolor{red}{7.80} & \textcolor{red}{7.68} & $\sim$ & $\sim$ & $\sim$ \\
Chordal-DeepSDP & \textcolor{red}{7.76} & \textcolor{red}{7.80} & \textcolor{red}{7.68} & $\sim$ & $\sim$ & $\sim$ \\
Small-Gain + ECLipsE & 10.64 & 10.68 & 10.58 & 10.59 & 10.81 & $>$1h \\
Small-Gain + ECLipsE-SN & 11.60 & 11.65 & 11.57 & 11.53 & 12.45 & 11.48 \\
Small-Gain + ECLipsE-GC & 10.74 & 10.84 & 10.73 & 10.73 & 11.40 & 10.29 \\
Small-Gain + ECLipsE-GCS & 10.75 & 10.87 & 10.76 & 10.78 & 11.50 & 10.29 \\
Small-Gain + ECLipsE-Shift & 10.91 & 10.95 & 10.87 & 10.85 & 11.74 & 11.12 \\
Small-Gain + EP-LipSDP & 9.82 & 10.26 & 9.20 & 9.35 & 9.72 & $\sim$ \\

\midrule
\multicolumn{7}{c}{\textbf{ABCD Framework}}\\
\midrule
ABCD + ECLipsE & 8.25 & 8.49 & 7.90 & 7.94 & 8.18 & $>$1h \\
ABCD + ECLipsE-SN & 9.23 & 9.50 & 9.21 & 8.98 & 9.20 & 8.91 \\
ABCD + ECLipsE-GC & 8.85 & 9.16 & 8.54 & 8.50 & 8.75 & {\color{red}7.72} \\
ABCD + ECLipsE-GCS & 8.88 & 9.26 & 8.62 & 8.59 & 8.89 & 7.73 \\
ABCD + ECLipsE-Shift & 9.03 & 9.28 & 9.01 & 8.86 & 9.09 & 8.76 \\
ABCD + EP-LipSDP & \textcolor{blue}{7.91} & \textcolor{blue}{8.01} & \textcolor{blue}{7.75} & \textcolor{red}{7.73} & \textcolor{red}{7.98} & $\sim$ \\
\bottomrule
\end{tabular}}
\end{table}

\begin{figure}[t]
\centering

\begin{subfigure}{1\linewidth}
\centering
\includegraphics[width=\linewidth]{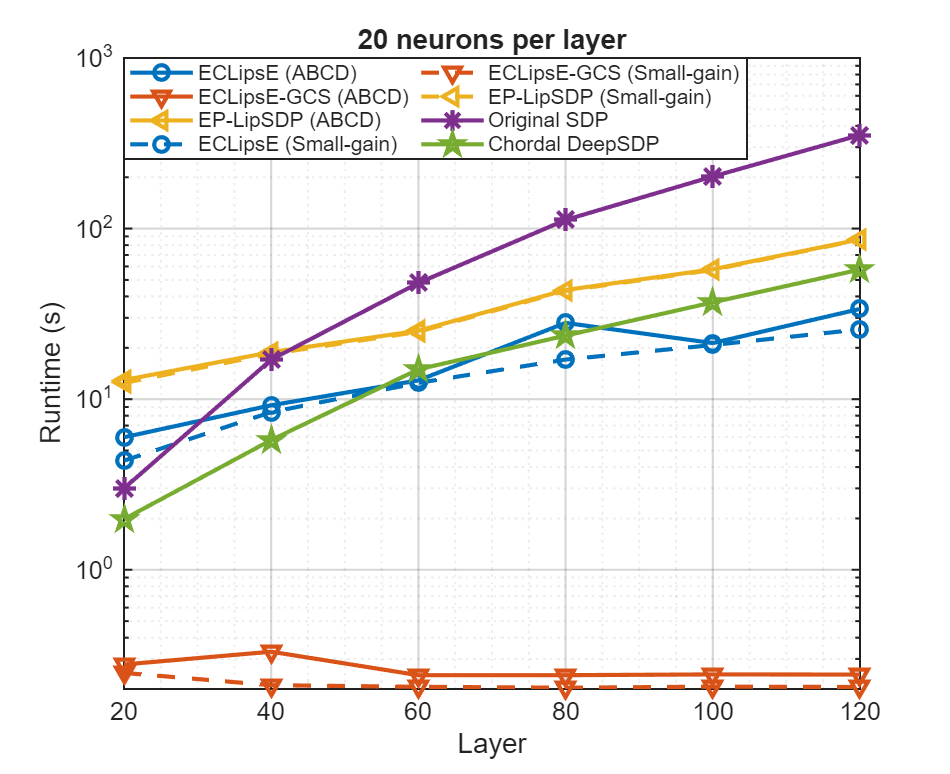}
\caption{Instance 1}
\end{subfigure}
\hfill
\begin{subfigure}{1\linewidth}
\centering
\includegraphics[width=\linewidth]{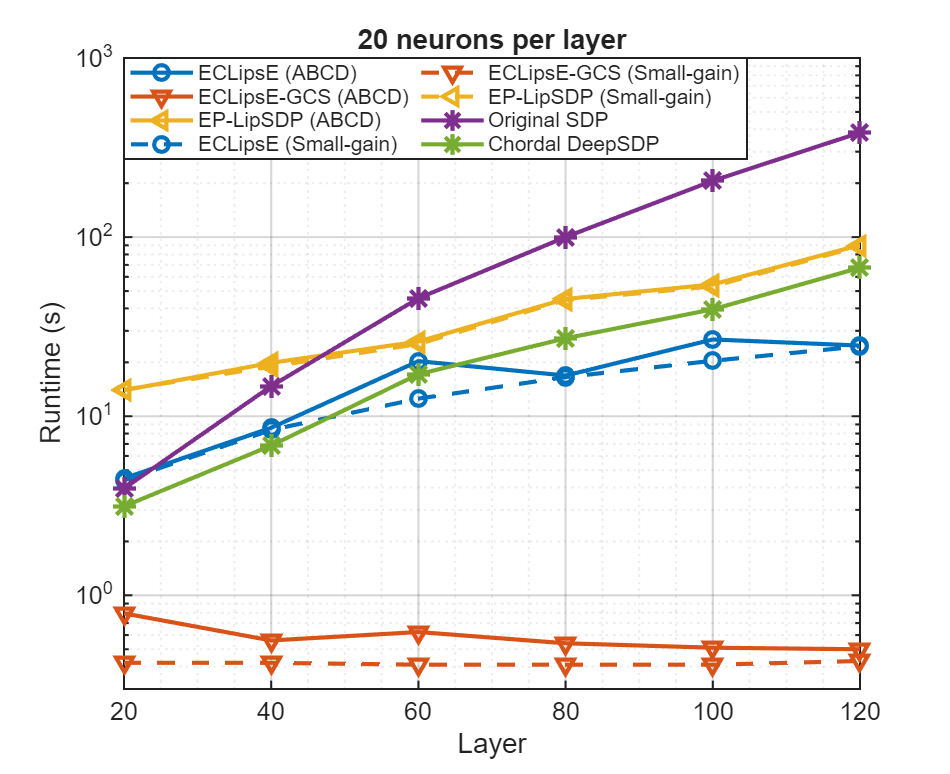}
\caption{Instance 2}
\end{subfigure}

\caption{Computational time versus network depth.}
\label{fig:computation_time}
\end{figure}

\subsection{Docking Case Study}
\label{subsec:docking}

We next consider a spacecraft docking example adapted from the
ARCH-COMP AINNCS benchmark \cite{ARCH25:ARCH_COMP25_Category_Report} to demonstrate the applicability
of the proposed framework to a physically meaningful closed-loop
control problem. We adopt the relative-motion dynamics and physical parameters from the benchmark, while constructing a new NN-augmented closed-loop system for the verification study.

\vspace{0.5em}

\subsubsection{System Description}

Consider the two-dimensional spacecraft docking problem based on the
Clohessy--Wiltshire relative-motion model \cite{ARCH25:ARCH_COMP25_Category_Report}. The state is defined as
$x_k =
    \begin{bmatrix}
        s_{x,k} &
        s_{y,k} &
        \dot{s}_{x,k} &
        \dot{s}_{y,k}
    \end{bmatrix}^{\tp}$,
where $s_{x,k}$ and $s_{y,k}$ denote the relative position of the
active deputy spacecraft with respect to the passive chief spacecraft
in Hill's reference frame, and $\dot{s}_{x,k}$ and $\dot{s}_{y,k}$
denote the corresponding relative velocities.

Following the benchmark setup, we use the spacecraft mass $m_{\rm sc}=12$ kg,
orbital mean motion $n_{\rm orb}=0.001027$ rad/s, and sampling period $T_s=1$ s.
The discretized dynamics are
\begin{equation}
\nonumber
    x_{k+1}
    =
    A_d x_k
    +
    B_d u_k,
\end{equation}
where
\begin{equation}
\nonumber
A_d
=
\begin{bmatrix}
1 & 0 & 1 & 0\\
0 & 1 & 0 & 1\\
3n_{\rm orb}^2 & 0 & 1 & 2n_{\rm orb}\\
0 & 0 & -2n_{\rm orb} & 1
\end{bmatrix},
\qquad
B_d
=
\begin{bmatrix}
0 & 0\\
0 & 0\\
1/m_{\rm sc} & 0\\
0 & 1/m_{\rm sc}
\end{bmatrix}.
\end{equation}
The input $u_k=
    \begin{bmatrix}
        u_{x,k} & u_{y,k}
    \end{bmatrix}^{\tp}$
represents the control forces along the two axes of Hill's reference
frame.

For the closed-loop verification study, we design a nominal LQR
controller $u_k^{\mathrm{LQR}}=K_{\mathrm{LQR}}x_k$
and augment it with a neural-network residual. The resulting commanded
control force is
\begin{equation}
\nonumber
    p_k
    =
    K_{\mathrm{LQR}}x_k
    +
    w_k,
\end{equation}
where $w_k=\Delta_{\mathrm{NN}}(x_k)$ denotes the NN output. The NN is
trained by supervised learning to approximate a bounded nonlinear
correction over the operating region of interest. The NN has
dimensions $4$-$256$-$256$-$2$ and uses the $\tanh$ activation function
at each layer, with a bias term included in each layer.

To account for actuator uncertainty, we consider independent
multiplicative gain variations in the two thrust channels,
\begin{equation}
    \nonumber q_k=\Delta_{\mathrm{U}}(p_k), \qquad
    \Delta_{\mathrm{U}}
    =\operatorname{diag}(\delta_1,\delta_2), \qquad
    |\delta_i|\leq\epsilon_{\mathrm{U}},
\end{equation}
where $\epsilon_{\mathrm{U}}=0.05$. The uncertainty is described by
the static incremental $\rho$-hard IQC $\{\Psi,M\}$ with $\rho=1$,
where
\begin{equation}
\nonumber
    r_k=
    \begin{bmatrix}
        \epsilon_{\mathrm{U}}I_2 & 0\\
        0 & I_2
    \end{bmatrix}
    \begin{bmatrix}
        p_k\\ q_k
    \end{bmatrix},
    \qquad
    M=\operatorname{diag}(I_2,-I_2).
\end{equation}
Including an additive thrust disturbance $d_k\in\mathbb{R}^2$, the
actual plant input is $u_k=p_k+q_k+d_k$. Hence, the closed-loop
dynamics are
\begin{equation}
\nonumber
    x_{k+1}
    =
    (A_d+B_dK_{\mathrm{LQR}})x_k
    +B_dw_k+B_dq_k+B_dd_k .
\end{equation}
This is a special case of the general interconnection considered in
Section~\ref{sec:preliminiries}. In particular,
\begin{align}
\begin{split}
\nonumber
    & A_G = A_d+B_dK_{\mathrm{LQR}},\ 
    B_{G1}=B_d,\
    B_{G2}=B_d, \
    B_{G3}=B_d,\\
    & C_{G1}=I_4,\ 
    D_{G12}=D_{G13}=0,\\
   &C_{G2}=K_{\mathrm{LQR}},\
    D_{G21}=I_2,\
    D_{G22}=D_{G23}=0.
    \end{split}
\end{align}

Finally, we choose the performance output as
$e_k=x_k$,
so that
\begin{equation}
\nonumber
    C_{G3}=I_4,\
    D_{G31}=D_{G32}=D_{G33}=0.
\end{equation}
Hence, the incremental $\ell_2$ gain from the additive thrust disturbance $d$ to the relative-motion state $e$ quantifies the closed-loop disturbance attenuation performance.

\vspace{0.5em}

\subsubsection{Verification Results}

We evaluate the proposed verification methods on the closed-loop
docking system described above. The experimental setup and solver
settings are the same as those used in the previous numerical
experiments. Table~\ref{tab:docking_results} compares the resulting incremental $\ell_2$ gain bounds with the full-order SDP in Theorem~\ref{Theorem:largeSDP1}, Chordal-DeepSDP, and the small gain approach. Theorem~1 and Chordal-DeepSDP provide the tightest gain bound of
$1.15$. Among the scalable approaches, ABCD consistently yields
tighter bounds than the corresponding small gain methods. In particular, ABCD with EP-LipSDP achieves
a gain bound of $1.23$, which is close to the full-order SDP bound, while
the corresponding small-gain bound is $1.79$.

\begin{table}[t]
    \centering
    \caption{Upper bound of the incremental $\ell_2$ gain for the
    spacecraft docking example.}
    \label{tab:docking_results}
    \begin{tabular}{l|c}
        \hline
        \textbf{Method} & \textbf{$\ell_2$ Gain Bound} \\
        \hline
        \multicolumn{2}{c}{\textbf{Baselines}} \\
        \hline
        Theorem \ref{Theorem:largeSDP1}                    & \textcolor{red}{1.15} \\
        Chordal-DeepSDP              & \textcolor{red}{1.15} \\
        Small-Gain + ECLipsE         & 1.96 \\
        Small-Gain + ECLipsE-SN      & 2.10 \\
        Small-Gain + ECLipsE-GC      & 2.08 \\
        Small-Gain + ECLipsE-GCS     & 2.08 \\
        Small-Gain + ECLipsE-Shift   & 2.01 \\
        Small-Gain + EP-LipSDP       & 1.79 \\
        \hline
        \multicolumn{2}{c}{\textbf{ABCD Framework}} \\
        \hline
        ABCD + ECLipsE               & 1.38 \\
        ABCD + ECLipsE-SN            & 1.48 \\
        ABCD + ECLipsE-GC            & 1.46 \\
        ABCD + ECLipsE-GCS           & 1.46 \\
        ABCD + ECLipsE-Shift         & 1.45 \\
        ABCD + EP-LipSDP             & \textcolor{blue}{1.23} \\
        \hline
    \end{tabular}
\end{table}

%% file: Appendix_modified.tex
\appendices

\section{Interpretation of the LMI Condition \eqref{eq:robust-condition}}\label{whyLMI}

We provide a brief derivation to clarify the structure of the LMI
condition in \eqref{eq:robust-condition}. Define the stacked vector of
all layer activations as
$\mathbf{h}
:=
\begin{bmatrix}
(h^1)^\tp&\cdots&(h^N)^\tp
\end{bmatrix}^\tp$.
Then, the augmented NN in \eqref{eq:Delta2} can be equivalently written as
\begin{equation}
    \mathbf{h}
    =
    \phi\left(
    \mathbf{W}\mathbf{h}
    +\mathbf{U}\xi+\mathbf{b}
    \right),
    \label{eq:stacked_nn}
\end{equation}
where 
\[
\mathbf{W}
=
\begin{bmatrix}
0      &        &        & 0\\
W_2    & 0      &        &  \\
       & \ddots & \ddots &  \\
0      &        & W_N    & 0
\end{bmatrix}.
\qquad
\mathbf{U}
=
\begin{bmatrix}
\widetilde W_1\\
0\\
\vdots\\
0
\end{bmatrix},
\qquad
\mathbf{b}
=
\begin{bmatrix}
b_1\\
b_2\\
\vdots\\
b_N
\end{bmatrix}.
\]
Thus, the recursive representation in \eqref{eq:Delta2} can be viewed
as a structured interconnection of $N$ slope-restricted nonlinearities
$\phi$, with $\mathbf{W}$ encoding the feedforward connections between
consecutive layers and $\mathbf{U}$ injecting the NN input into the
first layer.

Consider two inputs $\xi,\xi'$ and their corresponding stacked layer
activations $\mathbf{h},\mathbf{h}'$. Define
$\delta\xi:=\xi-\xi'$ and
$\delta\mathbf{h}:=\mathbf{h}-\mathbf{h}'$.
Since $\phi$ is slope-restricted in $[0,1]$, we have
\begin{equation}
    2\delta\mathbf{h}^{\top}\Lambda_{\rm blk}
    \left(
        \mathbf{W}\delta\mathbf{h}
        +\mathbf{U}\delta\xi
        -\delta\mathbf{h}
    \right)
    \geq 0,
    \label{eq:stacked_nn_qc}
\end{equation}
for any
$\Lambda_{\rm blk}=\operatorname{diag}(\Lambda_1,\ldots,\Lambda_N)$,
where each $\Lambda_l\succeq0$ is diagonal.
Expanding \eqref{eq:stacked_nn_qc} in terms of
$\begin{bmatrix}
\delta\xi^\tp,\delta\mathbf{h}^\tp
\end{bmatrix}^\tp$
gives
\[
\begin{bmatrix}
\delta\xi\\
\delta\mathbf{h}
\end{bmatrix}^{\tp}
\widetilde{\mathsf{M}}_{\mathrm{NN}}(\mathsf\Lambda)
\begin{bmatrix}
\delta\xi\\
\delta\mathbf{h}
\end{bmatrix}
\geq0,
\]
where $\widetilde{\mathsf{M}}_{\mathrm{NN}}(\mathsf\Lambda)$ is exactly the
matrix defined in \eqref{tildeNN}.

Finally, the remaining interconnected system interacts with the NN
only through its input $\xi$ and output $w=h^N$. Hence, its dissipation
inequality can be expressed in the same stacked coordinates by padding
the blocks corresponding to the intermediate layer activations with
zeros, yielding $\mathsf{M}_G(P,\gamma,\alpha)$ in \eqref{ABC}.
Combining this dissipation inequality with the NN quadratic constraint
above yields \eqref{eq:robust-condition}.

\section{Omitted Proofs for Section \ref{sec:background}}

\subsection{Proof of Theorem \ref{Theorem:largeSDP1}}

\begin{proof}
We prove the two claims separately.

\noindent\textbf{Incremental convergence.}
If LMI~\eqref{eq:robust-condition} holds, then there exists a scalar $\epsilon \in (0,1)$ such that
\begin{align}\label{eq:extendepsilon}
\begin{split}
&\widetilde{\mathsf{M}}_{\operatorname{NN}}(\mathsf{\Lambda})
+\mM_G(P, \gamma, \alpha) + \epsilon \operatorname{diag}\!\big(P,0_{n_\phi + n_q + n_d}\big)
\prec 0 .
\end{split}
\end{align}

Now consider two arbitrary trajectories
$(\zeta,v,w,p,q,d,e)$ and
$(\zeta^\prime,v^\prime,w^\prime,p^\prime,q^\prime,d^\prime,e^\prime)$
associated with initial conditions
$x_0,x_0^\prime\in\mathbb R^{n_x}$, where
$d_k=d_k^\prime=0$ for all $k\in \mathbb Z_{\ge 0 }$.
Define the incremental signals
$
\delta \zeta_k := \zeta_k-\zeta_k^\prime
$,
$
\delta v_k := v_k-v_k^\prime
$,
$
\delta \bold h_k := \bold h_k-\bold h_k^\prime
$, $
\delta q_k := q_k-q_k^\prime
$
and $
\delta d_k := d_k-d_k^\prime
$,
where
$
\bold h_k :=
\bmat{(h_k^1)^\tp & \cdots &(h_k^N)^\tp}^\tp
$ and $\bold h_k'$ defined analogously.
By the definition of $v_k$, we have $W_1 \delta v_k
=
\widetilde{W}_1\begin{bmatrix}
\delta\zeta_k^\tp &
\delta q_k^\tp &
\delta d_k^\tp
\end{bmatrix}^\tp$.
Hence,
\begin{align}
\begin{split}
&
\begin{bmatrix}
\delta\zeta_k^\tp &
\delta q_k^\tp &
\delta d_k^\tp &
\delta\bold h_k^\tp
\end{bmatrix}
\widetilde{\mathsf M}_{\mathrm{NN}}(\mathsf\Lambda)
\begin{bmatrix}
\star
\end{bmatrix}^\tp
\\
=&\;
\begin{bmatrix}
\delta v_k \\
\delta\bold h_k
\end{bmatrix}^\tp
\mathsf M_{\mathrm{NN}}(\mathsf\Lambda)
\begin{bmatrix}
\delta v_k \\
\delta\bold h_k
\end{bmatrix}
\ge0,
\end{split}
\end{align}
where $[\star]^\tp$ denotes the transpose of the preceding block vector. Therefore, premultiplying and postmultiplying~\eqref{eq:extendepsilon}
by $\begin{bmatrix}
\delta\zeta_k^\tp &
\delta q_k^\tp &
\delta d_k^\tp &
\delta\bold h_k^\tp
\end{bmatrix}$
and its transpose yields
\begin{align}
\begin{split}
\nonumber
V(\delta \zeta_{k+1})
- (1-\epsilon)V(\delta \zeta_k)
+ \delta r_k^\tp M(\alpha)\, \delta r_k  \le 0,\quad \forall k\in\mathbb{Z}_{\ge 0},
\end{split}
\end{align}
where $V(\delta\zeta) := \delta\zeta^\tp P \delta\zeta$ and $\delta r_k := r_k - r_k^\prime$.
Summing the above inequality from $k=0$ to $K$ for any $K\in\mathbb{Z}_{\ge 0}$ gives
\begin{align}
\begin{split}
\nonumber
V(\delta \zeta_{K+1})
+ \sum_{k=0}^K \epsilon\,V(\delta \zeta_k)
+ \sum_{k=0}^K \delta r_k^\tp M(\alpha)\, \delta r_k
\le V(\delta \zeta_0).
\end{split}
\end{align}
Since $\Delta_\text{U}$ satisfies incremental $\rho$-hard IQCs and, by Remark~\ref{remarkrho}, the same inequality holds with $\rho = 1$, the term $\sum_{k=0}^K \delta r_k^\tp M(\alpha)\, \delta r_k$
is nonnegative for all $K \in \mathbb{Z}_{\ge0}$. Therefore,
\begin{align}
\nonumber
V(\delta \zeta_{K+1})
+ \sum_{k=1}^K \epsilon\,V(\delta \zeta_k)
\le V(\delta \zeta_0).
\end{align}
Because $V(\cdot)\ge 0$ and $\epsilon>0$, the above inequality implies that
$\sum_{k=1}^\infty V(\delta \zeta_k) < \infty$,
and hence $V(\delta \zeta_k)\to 0$ as $k\to\infty$. Since $P\succ 0$, the state components satisfy
$\|x_k - x_k'\|_2 \to 0$.

\vspace{1em}

\noindent\textbf{Incremental $\ell_2$ gain upper bound.}
Now consider two arbitrary trajectories
$(\zeta,v,w,p,q,d,e)$ and
$(\zeta^\prime,v^\prime,w^\prime,p^\prime,q^\prime,d^\prime,e^\prime)$
of the above interconnection starting from
$x_0,x_0^\prime\in\mathbb R^{n_x}$, where
$d,d^\prime\in \ell_2^{n_d}$.
Premultiplying and postmultiplying \eqref{eq:robust-condition} by $\begin{bmatrix}
\delta\zeta_k^\tp &  \delta q_k^\tp& \delta d_k^\tp& \delta\bold{h}_k^\tp
\end{bmatrix}$ and its transpose yields
\begin{align}\label{term1&term2}
\begin{split}
&\norm{\delta e_k}_2^2
- \gamma \norm{\delta d_k}_2^2
+ V(\delta \zeta_{k+1})
- V(\delta \zeta_k) \\& +
\delta r_k^\tp M(\alpha) \delta r_k  \le 0, \quad k \in \mathbb{Z}_{\ge 0 },
\end{split}
\end{align}
where $\delta e_k := e_k - e_k^\prime $.
Since $\sum_{k=0}^K \delta r_k^\tp M(\alpha) \delta r_k \ge 0$ for all $K\in\mathbb{Z}_{\ge 0}$, summing~\eqref{term1&term2} from $k=0$ to any $K\in\mathbb{Z}_{\ge 0}$ yields
\[
\sum_{k=0}^K \norm{\delta e_k}_2^2
\le
\gamma \sum_{k=0}^K \norm{\delta d_k}_2^2
+ V(\delta \zeta_0).
\]
Based on Definition \ref{def:incrementalL2gain}, this shows that $\sqrt{\gamma}$ is a valid upper bound on the incremental $\ell_2$ gain.

\end{proof}

\subsection{Proof of Proposition \ref{proposition2}}

\begin{proof}  
We prove the two claims separately.

\noindent\textbf{Incremental convergence.}
Suppose that LMI~\eqref{eq:smallgain} holds. 
Then there exists a scalar $\epsilon \in (0,1)$ such that
\begin{align}\label{eq:extendepsilonsmallgain}
\begin{split}
&\bmat{\mathbf{A}(P,\gamma,\alpha) &\mathbf{B}(P,\alpha)\\
\mathbf{B}(P,\alpha)^\tp &\mathbf{C}(P,\alpha)} 
+ \text{diag}(I_{m},-\tfrac{1}{L}I_{n_w})
\\&+ \epsilon \operatorname{diag}\big(P,0_{n_w+n_q+n_d}\big) \prec 0.
\end{split}
\end{align}

Consider two arbitrary trajectories
$(\zeta,v,w,p,q,d,e)$ and
$(\zeta^\prime,v^\prime,w^\prime,p^\prime,q^\prime,d^\prime,e^\prime)$
associated with initial conditions
$x_0,x_0^\prime\in\mathbb R^{n_x}$, where
$d_k=d_k^\prime=0$ for all $k\in \mathbb Z_{\ge 0 }$.
Define
$\xi_k
:=
\bmat{\zeta_k^\tp&q_k^\tp&d_k^\tp}^\tp$,
and the incremental signals
$\delta w_k := w_k - w_k^\prime,
$ $\delta \xi_k := \xi_k - \xi_k^\prime$.
Premultiplying and postmultiplying
\eqref{eq:extendepsilonsmallgain}
by
$\begin{bmatrix}
\delta\zeta_k^\tp &
\delta q_k^\tp &
\delta d_k^\tp &
\delta w_k^\tp
\end{bmatrix}$
and its transpose yields
\begin{align}
\begin{split}
&
V(\delta \zeta_{k+1})
-
(1-\epsilon)V(\delta \zeta_k)
+
\delta r_k^\tp M(\alpha)\delta r_k
\\
&
+\|\delta \xi_k\|_2^2
-
\tfrac{1}{L}\|\delta w_k\|_2^2
\le
0,
\qquad
\forall k \in \mathbb{Z}_{\ge 0}.
\end{split}
\end{align}
 By assumption, $\sqrt{L}$ is an upper bound on the Lipschitz constant of 
$\Delta_{\mathrm{NN}}'$ as defined in \eqref{eq:Delta2}, so that $\|\delta w_k\|_2
\le \sqrt{L}\,\|\delta \xi_k\|_2$.
Consequently,
$\|\delta \xi_k\|_2^2 - \tfrac{1}{L}\|\delta w_k\|_2^2
\ge 0$,
and therefore the previous inequality implies
\[
V(\delta \zeta_{k+1})
- (1-\epsilon)V(\delta \zeta_k)
+ \delta r_k^\tp M(\alpha)\, \delta r_k
\le 0.
\]
The remainder of the argument proceeds exactly as in the proof of 
Theorem~\ref{Theorem:largeSDP1}.

\noindent\textbf{Incremental $\ell_2$ gain upper bound.}
Consider two arbitrary trajectories
$(\zeta,v,w,p,q,d,e)$ and
$(\zeta^\prime,v^\prime,w^\prime,p^\prime,q^\prime,d^\prime,e^\prime)$
of the above interconnection starting from $x_0,x_0^\prime\in\mathbb R^{n_x}$, where
$d,d^\prime\in \ell_2^{n_d}$. 
Premultiplying and postmultiplying \eqref{eq:smallgain} by 
$\begin{bmatrix}
\delta\zeta_k^\tp & \delta q_k^\tp & \delta d_k^\tp & \delta w_k^\tp
\end{bmatrix}$ 
and its transpose yields
\begin{align}
\begin{split}
&\norm{\delta e_k}_2^2
- \gamma \norm{\delta d_k}_2^2
+ V(\delta \zeta_{k+1})
- V(\delta \zeta_k)  +
\delta r_k^\tp M(\alpha) \delta r_k \\&
   + \|\delta \xi_k\|_2^2 - \tfrac{1}{L}\|\delta w_k\|_2^2
   \le 0,\quad \forall k \in \mathbb{Z}_{\ge 0}.
\end{split}
\end{align}
Since $\|\delta w_k\|_2\le\sqrt{L}\|\delta\xi_k\|_2$, the above inequality implies that
\begin{align}
\begin{split}
\nonumber
\norm{\delta e_k}_2^2
- \gamma \norm{\delta d_k}_2^2
+ V(\delta \zeta_{k+1})
- V(\delta \zeta_k)+
\delta r_k^\tp M(\alpha) \delta r_k 
   \le 0.
\end{split}
\end{align}
Summing the above inequality from $k=0$ to $K$ and proceeding as in the proof of 
Theorem~\ref{Theorem:largeSDP1} establishes the desired incremental $\ell_2$ gain bound.

\end{proof}

\section{Omitted Proofs for Section \ref{sec:normal}}

\subsection{Proof of Lemma \ref{lemma2}}

\begin{proof}
We first prove the ``if'' direction.
Suppose there exist matrices $\{Y_1,Y_2,Y_3\}$ such that
\eqref{eq:key2} and \eqref{eq:key3} hold.
From \eqref{eq:key2}, it follows that
\begin{align*}
    \mathsf{M}_G(P, \gamma,\alpha) - \mM(Y_1,Y_2,Y_3) \preceq 0.
\end{align*}
Adding this inequality to \eqref{eq:key3} yields
\eqref{eq:robust-condition}, which establishes the ``if'' direction.

We now prove the ``only if'' direction.
Suppose that \eqref{eq:robust-condition} holds for some
$\{P,\alpha,\gamma,\mathsf{\Lambda}\}$.
Since the inequality is strict, there exists $\epsilon > 0$ such that
\begin{align}
\begin{split}
\nonumber
&\widetilde{\mathsf{M}}_{\operatorname{NN}}(\mathsf{\Lambda})
    +\mathsf{M}_G(P, \gamma, \alpha) 
    +\epsilon\text{diag}\big(I_{m},0_{n_\phi-n_w},I_{n_w}\big)
    \prec 0.
\end{split}
\end{align}
Define $Y_1 := \mathbf{A}(P,\gamma,\alpha)+\epsilon I_{m}$, 
$Y_2 := \mathbf{C}(P,\alpha)+\epsilon I_{n_w}$, and
$Y_3 := \mathbf{B}(P,\alpha)$.
By direct substitution, this choice of $\{Y_1,Y_2,Y_3\}$ satisfies
both \eqref{eq:key2} and \eqref{eq:key3}, thereby completing the proof.
\end{proof}

\subsection{Proof of Theorem \ref{coro:main3}}

\begin{proof}
Suppose that $\{\mathsf\Lambda_{N-1},Y_1\}$ is given such that
$\widetilde{\mM}_{\mathrm{NN}}(\mathsf{\Lambda}_{N-1})+\operatorname{diag}(Y_1,0_{n_\phi-n_w})\prec 0$.
By the Schur complement, this condition implies that $\Xi_l \prec 0$ for all $l\in[N]$, where $\Xi_1=Y_1$, $\Xi_2$ is defined in \eqref{Xi2Gamma2} and $\Xi_l$, $l = 3,\dots,N$ is defined in \eqref{eq:ECLipsE-recursion}.

Applying the Schur complement, the matrix
$\widetilde{\mM}_{\mathrm{NN}}(\mathsf{\Lambda})+\mM(Y_1,Y_2,Y_3)$
is negative definite if and only if $\Xi_1\prec 0$, $\Xi_2\prec 0$, and
\begin{align}
\nonumber
\scalebox{0.85}{$
\bmat{
\Xi_2 & W_2^\tp \Lambda_2  & 0 & \cdots & \Gamma_2Y_3 \\
\Lambda_2 W_2 & -2\Lambda_2  & W_3^\tp \Lambda_3 & \ddots & \vdots \\
0 & \Lambda_3 W_3 & \ddots & \ddots & \vdots \\
\vdots & \ddots & \ddots & -2\Lambda_{N-1} & W_N^\tp \Lambda_N \\
Y_3^\tp\Gamma_2^\tp & \cdots & 0 & \Lambda_N W_N
& Y_2 - 2\Lambda_N - Y_3^\tp \Gamma_1^\tp \Xi_1^{-1} \Gamma_1 Y_3
}\prec 0.$}
\end{align}
By recursively applying the same argument, we obtain that \eqref{eq:key3} holds if and only if
$\Xi_l \prec 0$ for all $l\in[N]$ and
\begin{align}\label{theorem2eq1}
\scalebox{0.8}{$
\bmat{
\Xi_N & W_N^\tp \Lambda_N + \Gamma_N Y_3 \\
\Lambda_N W_N + Y_3^\tp \Gamma_N^\tp
& Y_2 - 2\Lambda_N - Y_3^\tp \big(\sum_{l=1}^{N-1} \Gamma_l^\tp \Xi_l^{-1} \Gamma_l\big) Y_3
}
\prec 0 .$}
\end{align}
Since $\{\mathsf\Lambda_{N-1},Y_1\}$ guarantees $\Xi_l\prec 0$ for all $l\in[N]$, feasibility of
\eqref{theorem2eq1} for some $\{Y_1,Y_2,Y_3\}$ immediately implies the validity of \eqref{eq:key3}.
Noting that \eqref{theorem2eq1} contains bilinear terms in $Y_3$, we rewrite it as
\begin{align}
\begin{split}
\nonumber
&\bmat{
\Xi_N & W_N^\tp \Lambda_N + \Gamma_N Y_3 \\
\Lambda_N W_N + Y_3^\tp \Gamma_N^\tp & Y_2 - 2\Lambda_N
}
\\&-
\bmat{0\\ Y_3^\tp}
\Big(\sum_{l=1}^{N-1}\Gamma_l^\tp \Xi_l^{-1}\Gamma_l\Big)
\bmat{0 & Y_3}
\prec 0 .
\end{split}
\end{align}
A further application of the Schur complement then yields
\begin{align}
\nonumber
\scalebox{0.85}{$
    \bmat{
        \Xi_N 
        & W_N^\tp\Lambda_N  + \Gamma_N Y_3 
        & 0\\
         \Lambda_NW_N + Y_3^\tp \Gamma_N^\tp 
        & Y_2 - 2\Lambda_N 
        & Y_3^\tp\\
        0 
        & Y_3 
        & \Big(\sum_{l=1}^{N-1} \Gamma_l^\tp \Xi_l^{-1} \Gamma_l \Big)^{-1}
    }
    \prec 0.$}
\end{align}
Note that $\sum_{l=1}^{N-1}\Gamma_l^\top \Xi_l^{-1}\Gamma_l$ is negative definite and hence invertible, since its first term equals $\Xi_1^{-1}$ with $\Xi_1 \prec 0$, while the remaining terms are negative semidefinite.
Therefore, if the two LMIs in~\eqref{key:lmi5} are satisfied, then the conditions
\eqref{eq:key2} and \eqref{eq:key3} hold for the same
$\{P,\alpha,\gamma,\mathsf{\Lambda},Y_1,Y_2,Y_3\}$. 
\end{proof}

\section{Omitted Proofs for Section \ref{sec:sufficient_condition}}

\subsection{Proof of Theorem \ref{Theorem:bounded}}

\begin{proof}
Define $c := \sum_{l=1}^{N} b_l^\tp b_l$.
If $c=0$, then all bias terms vanish. Since $\phi(0)=0$, $d=0$, and
$\Delta_{\operatorname{U}}(0)=0$, the zero trajectory is an admissible
reference trajectory. Hence, boundedness follows directly from the
incremental convergence result in Theorem~1. Therefore, it remains to
consider the case $c>0$.

Let $E := \begin{bmatrix}
C_2 & D_{22} & D_{23} &
0_{n_e\times(n_\phi-n_w)} & D_{21}
\end{bmatrix}$,
and define $\mM_G^{0}(P,\gamma,\alpha)$ as the matrix obtained from
$\mM_G(P,\gamma,\alpha)$ by removing the positive semidefinite
performance term $E^\tp E$, i.e.,
\[
    \mM_G(P,\gamma,\alpha)
    =
    \mM_G^{0}(P,\gamma,\alpha)+E^\tp E .
\]
Since $E^\tp E\succeq0$, feasibility of
\eqref{eq:robust-condition} implies
\[
    \widetilde{\mathsf{M}}_{\operatorname{NN}}(\mathsf{\Lambda})
    +\mM_G^{0}(P,\gamma,\alpha)
    \prec0.
\]
Moreover, $\mM_G^{0}(P,\gamma,\alpha)$ is homogeneous in
$(P,\gamma,\alpha)$.

Since the above inequality is strict, there exist scalars
$\epsilon>0$ and $0<\sigma\leq\frac{1-\rho}{c}$
such that
\begin{align}
\begin{split}
\nonumber
&\widetilde{\mathsf{M}}_{\operatorname{NN}}(\mathsf{\Lambda})
+\mM_G^{0}(P,\gamma,\alpha)
+\sigma\operatorname{diag}\!\big(
    cP,0_{n_\phi+n_q+n_d}
\big)
\\
&\quad
+\frac{\epsilon}{\sigma}
\operatorname{diag}\!\big(
    0_{m},
    \Lambda_1^2,\ldots,\Lambda_N^2
\big)
\prec0 .
\end{split}
\end{align}
Define $\tilde P:=\epsilon P$,
$\tilde\gamma:=\epsilon\gamma$,
$\tilde\alpha:=\epsilon\alpha$,
$\tilde\Lambda_l:=\epsilon\Lambda_l$,
and $\tilde{\mathsf{\Lambda}}
    :=\{\tilde\Lambda_l\}_{l=1}^N$.
Multiplying the preceding inequality by $\epsilon$ and using the
homogeneity of
$\widetilde{\mathsf{M}}_{\operatorname{NN}}$ and $\mM_G^{0}$ yields
\begin{align}\label{epsilonLMI}
\begin{split}
&\widetilde{\mathsf{M}}_{\operatorname{NN}}
    (\tilde{\mathsf{\Lambda}})
+\mM_G^{0}(\tilde P,\tilde\gamma,\tilde\alpha)
+\sigma\operatorname{diag}\!\big(
    c\tilde P,0_{n_\phi+n_q+n_d}
\big)
\\
&\quad
+\frac{1}{\sigma}
\operatorname{diag}\!\big(
    0_{m},
    \tilde\Lambda_1^2,\ldots,\tilde\Lambda_N^2
\big)
\prec0 .
\end{split}
\end{align}

Applying the Schur complement, \eqref{epsilonLMI} is equivalent to
\begin{align}\label{implicitLMI}
\begin{split}
&\operatorname{diag}\!\Big(
    \widetilde{\mathsf{M}}_{\operatorname{NN}}
        (\tilde{\mathsf{\Lambda}})
    +\mM_G^{0}(\tilde P,\tilde\gamma,\tilde\alpha),
    0_{n_\phi}
\Big)
\\
&\quad+
\begin{bmatrix}
\sigma\operatorname{diag}\!\big(
    c\tilde P,0_{n_\phi+n_q+n_d}
\big)
&
\mM_b(\tilde{\mathsf{\Lambda}})^\tp
\\
\mM_b(\tilde{\mathsf{\Lambda}})
&
-\sigma I_{n_\phi}
\end{bmatrix}
\prec0 ,
\end{split}
\end{align}
where
\[
\mM_b(\tilde{\mathsf{\Lambda}})
:=
\begin{bmatrix}
0 & \tilde\Lambda_1 & 0 & \cdots & 0\\
0 & 0 & \tilde\Lambda_2 & \cdots & 0\\
\vdots & \vdots & \vdots & \ddots & \vdots\\
0 & \cdots & \cdots & \cdots & \tilde\Lambda_N
\end{bmatrix}.
\]

Now consider an arbitrary trajectory
$(\zeta,v,w,p,q,d,e)$ associated with an initial condition
$x_0\in\mathbb{R}^{n_x}$ and satisfying
$d_k=0$ for all $k\in\mathbb{Z}_{\geq0}$. Since $\Delta_{\operatorname{U}}(0)=0$, the incremental IQC can be
applied to $(p,q)$ and the zero input-output pair, so that the
incremental filtered signal coincides with $r_k$.
Premultiplying and postmultiplying \eqref{implicitLMI} by $\begin{bmatrix}
    \zeta_k^\tp &
    q_k^\tp &
    d_k^\tp &
    \bold{h}_k^\tp &
    \bold{b}^\tp
\end{bmatrix}$
and its transpose, where
$\bold{b}
    :=
    \begin{bmatrix}
        b_1^\tp & \cdots & b_N^\tp
    \end{bmatrix}^\tp$,
and using the quadratic constraint associated with the
slope restriction of $\phi$ and $\phi(0)=0$, we obtain
\begin{align}
\begin{split}
\nonumber
&V(\zeta_{k+1})-V(\zeta_k)
+r_k^\tp M(\tilde\alpha)r_k
\\
&\quad
+\sigma\left(
    c-\sum_{l=1}^N b_l^\tp b_l
\right)
+\sigma c\big(V(\zeta_k)-1\big)
<0,
\qquad
\forall k\in\mathbb{Z}_{\geq0},
\end{split}
\end{align}
where $V(\zeta_k):=\zeta_k^\tp\tilde P\zeta_k$.
Using the definition of $c$, the constant term cancels. Define $\tilde V(\zeta_k):=V(\zeta_k)-1$.
Then
\begin{align}
    \tilde V(\zeta_{k+1})
    -(1-\sigma c)\tilde V(\zeta_k)
    +r_k^\tp M(\tilde\alpha)r_k
    <0 .
\end{align}
Recursively applying this inequality gives, for every
$K\in\mathbb{Z}_{\geq0}$,
\[
\tilde V(\zeta_{K+1})
<
(1-\sigma c)^{K+1}\tilde V(\zeta_0)
-
\sum_{k=0}^{K}
(1-\sigma c)^{K-k}
r_k^\tp M(\tilde\alpha)r_k .
\]

Since the uncertain perturbation satisfies the incremental $\rho$-hard IQCs and
$\rho\leq1-\sigma c$, Remark~\ref{remarkrho} gives
$\sum_{k=0}^{K}
    (1-\sigma c)^{-k}
    r_k^\tp M(\tilde\alpha)r_k
    \geq0$.
Multiplying both sides by $(1-\sigma c)^K$ yields $\sum_{k=0}^{K}
    (1-\sigma c)^{K-k}
    r_k^\tp M(\tilde\alpha)r_k
    \geq0$.
Hence,
\[
    V(\zeta_{K+1})
    <
    (1-\sigma c)^{K+1}
    \big(V(\zeta_0)-1\big)+1 .
\]

Finally, since
\[
    V(\zeta)
    =\zeta^\tp\tilde P\zeta
    \geq
    \lambda_{\min}(\tilde P)\|\zeta\|_2^2
\]
and $x_k$ is a component of $\zeta_k$, we obtain
\[
\|x_{K+1}\|_2
<
\sqrt{
\frac{
(1-\sigma c)^{K+1}
\big(V(\zeta_0)-1\big)+1
}{
\lambda_{\min}(\tilde P)
}},
\qquad
\forall K\in\mathbb{Z}_{\geq0}.
\]
Thus, $\sup_{k\in\mathbb{Z}_{\geq0}}\|x_k\|_2<\infty$.
\end{proof}

\subsection{Proof of Theorem \ref{them:counterexample}}

\begin{proof}
    Consider the following \emph{scalar} discrete-time system:
\begin{align*}
\nonumber
x_{k+1} &= 0.5 x_k + w_k + q_k,\\
v_k &= x_k\\
p_k &= x_k
\end{align*}
where $v_k,x_k,w_k,q_k \in \mathbb{R}$.
We define a special one layer NN:
\[
w_k = \phi(0\, v_k + b),\qquad \phi(x)=x,
\]
so that the output of the NN $w_k = b$ is a constant and can be seen as a constant offset. The uncertain perturbation $\Delta_\text{U}$ is given by
\[
q_k = \Delta_U(p_k) = \beta_k p_k,
\]
where the gain $\beta_k$ switches periodically with the parity of $k$:
\[
\beta_k=
\begin{cases}
0.1, & k\ \text{odd},\\
0.2, & k\ \text{even}.
\end{cases}
\]
It can be verified directly that $\Delta_U$ and the interconnection satisfy Assumption \ref{assumption1}.

For any two input sequences $p$ and $p^\prime$ (with the same switching rule $\{\beta_k\}$), the corresponding outputs satisfy
\[
\delta q_k := q_k-q_k^\prime = \beta_k (p_k- p_k^\prime)=\beta_k\,\delta p_k,\quad \forall k \in \mathbb{Z}_{\ge 0}.
\]
Since $\beta_k\in[0.1,0.2]$, the operator $\Delta_\text{U}$ satisfies the following incremental QC
\[
\begin{bmatrix}\delta p_k\\ \delta q_k\end{bmatrix}^{\!\tp}
\begin{bmatrix}
-0.02 & 0.15\\
0.15 & -1
\end{bmatrix}
\begin{bmatrix}\delta p_k\\ \delta q_k\end{bmatrix}\ge 0,\quad \forall k \in \mathbb{Z}_{\ge0}.
\]
Hence, the corresponding weighted sum is nonnegative for any
$\rho\in(0,1]$, and therefore $\Delta_U$ satisfies the required
incremental $\rho$-hard IQC.

For this instance, the LMI in Theorem \ref{Theorem:largeSDP1} reduces to
\begin{align}
\label{eq:lmi_instance}
\begin{split}
\nonumber
&\begin{bmatrix}
0.25P - P & 0.5P & 0.5P\\
0.5P & P & P\\
0.5P & P & P
\end{bmatrix}
+\begin{bmatrix}
0 & 0 & 0\\
0 & 0 & 0\\
0 & 0 & -2\Lambda
\end{bmatrix}\\&
+\alpha_1
\begin{bmatrix}
1 & 0\\
0 & 1\\
0 & 0
\end{bmatrix}
\begin{bmatrix}
-0.02 & 0.15\\
0.15 & -1
\end{bmatrix}
\begin{bmatrix}
1 & 0 & 0\\
0 & 1 & 0
\end{bmatrix}
\prec 0,
\end{split}
\end{align}
where $P>0$, $\Lambda \ge 0$, and $\alpha_1 \ge 0$ are decision variables. By solving the above LMI, we verify that it admits a feasible solution given by $P = 0.245$, $\Lambda = 0.533$ and $\alpha_1 = 1.418$.

However, the resulting closed-loop trajectories do not necessarily converge to any point. 
Indeed, for the above scalar instance,
\[
x_{k+1} = (0.5+\beta_k)x_k + b,
\]
where $\beta_k$ alternates between $0.1$ and $0.2$. If $b\neq 0$ and $\{x_k\}$ converges to some $x^\star$, then taking limits along even and odd subsequences yields
$x^\star = 0.6x^\star + b$ and $x^\star = 0.7x^\star + b$,
which imply $0.4x^\star=b$ and $0.3x^\star=b$, a contradiction. Hence, there exists an admissible interconnection for which the LMI is feasible, yet the closed-loop trajectories do not converge to any point.

\end{proof}